\def\@cite#1#2{[{{\bfseries #1}\if@tempswa , #2\fi}]}
\renewcommand{\section}{%
\@startsection{section}{1}{\z@}
{0.5truecm plus -1ex minus -.2ex}%
{1.0ex plus .2ex}{\bfseries\large}}
\def\@seccntformat#1{\csname the#1\endcsname.\ }
\numberwithin{equation}{section} 
\newtheorem{thm}{Theorem}[section]
\newtheorem{lem}[thm]{Lemma}
\theoremstyle{definition}
\newtheorem*{prth1.1}{Proof of Theorem 1.1}
\newtheorem*{prth1.2}{Proof of Theorem 1.2}
\newtheorem*{prth1.3}{Proof of Theorem 1.3}
\newcommand{\ep}{\varepsilon}
\newcommand{\tmax}{T_{\rm max}}
\newcommand{\lp}[2]{\|#2\|_{L^{#1}(\Omega)}}
\newcommand{\into}{\int_\Omega}
\begin{document}

\footnote[0]
    {2010{\it Mathematics Subject Classification}\/. 
    Primary: 35K45; Secondary: 92C17; 35Q35.
    }
\footnote[0]
    {{\it Key words and phrases}\/: 
    Keller--Segel-Stokes; 
    global existence; asymptotic stability. 
    }
\begin{center}
    \Large{{\bf Global existence and asymptotic behavior of classical solutions for a 3D two-species Keller--Segel-Stokes system with competitive kinetics 
               }}
\end{center}
\vspace{5pt}
\begin{center}
    Xinru Cao \\
    \vspace{2pt}
  Institut f\"ur Mathematik, Universit\"at Paderborn\\ 
  Warburger Str.100, 33098 Paderborn, Germany\\
    \vspace{12pt}
    Shunsuke Kurima\\
    \vspace{2pt}
    Department of Mathematics, 
    Tokyo University of Science\\
    1-3, Kagurazaka, Shinjuku-ku, Tokyo 162-8601, Japan\\
    \vspace{12pt}
     Masaaki Mizukami\footnote{Corresponding author}\\
    \vspace{2pt}
    Department of Mathematics, 
    Tokyo University of Science\\
    1-3, Kagurazaka, Shinjuku-ku, Tokyo 162-8601, Japan\\
    {\tt masaaki.mizukami.math@gmail.com}\\
\end{center}
\begin{center}    
    \small \today
\end{center}

\vspace{2pt}
\newenvironment{summary}
{\vspace{.5\baselineskip}\begin{list}{}{%
     \setlength{\baselineskip}{0.85\baselineskip}
     \setlength{\topsep}{0pt}
     \setlength{\leftmargin}{12mm}
     \setlength{\rightmargin}{12mm}
     \setlength{\listparindent}{0mm}
     \setlength{\itemindent}{\listparindent}
     \setlength{\parsep}{0pt} 
     \item\relax}}{\end{list}\vspace{.5\baselineskip}}
\begin{summary}
{\footnotesize {\bf Abstract.}
This paper deals with the two-species Keller--Segel-Stokes system 
with competitive kinetics 
\begin{equation*}
     \begin{cases}
         (n_1)_t + u\cdot\nabla n_1 =
          \Delta n_1 - \chi_1\nabla\cdot(n_1\nabla c) 
          + \mu_1n_1(1- n_1 - a_1n_2),
         &x\in \Omega,\ t>0,
 \\[2mm]
         (n_2)_t + u\cdot\nabla n_2 =
          \Delta n_2 - \chi_2\nabla\cdot(n_2\nabla c) 
          + \mu_2n_2(1- a_2n_1 - n_2),
         &x\in \Omega,\ t>0,
 \\[2mm]
         c_t + u\cdot\nabla c = 
         \Delta c - c + \alpha n_1 +\beta n_2,
          &x \in \Omega,\ t>0,
 \\[2mm]
        u_t  
          = \Delta u + \nabla P 
            + (\gamma n_1 + \delta n_2)\nabla\phi, 
            \quad \nabla\cdot u = 0, 
          &x \in \Omega,\ t>0
     \end{cases} 
 \end{equation*} 
under homogeneous Neumann boundary conditions in a bounded domain 
$\Omega \subset \mathbb{R}^3$ with smooth boundary. 
Many mathematicians study chemotaxis-fluid systems and 
two-species chemotaxis systems with 
competitive kinetics. 
However, there are not many results on coupled 
two-species chemotaxis-fluid systems 
which have difficulties of the chemotaxis effect, 
the competitive kinetics and the fluid influence. 
Recently, in the two-species chemotaxis-Stokes system, 
where $-c+\alpha n_1+\beta n_2$ is replaced with 
$-(\alpha n_1+\beta n_2)c$ in the above system, 
global existence and asymptotic behavior 
of classical solutions were obtained 
in the 3-dimensional case 
under the condition that 
$\mu_1,\mu_2$ are sufficiently large (\cite{CKM}). 
Nevertheless, 
the above system has not been studied yet; 
we cannot apply the same argument as in the 
previous works 
because of lacking the $L^\infty$-information of $c$. 
The main purpose of this paper is 
to obtain global existence and stabilization 
of classical solutions to the above system in the 3-dimensional case 
under the largeness conditions for $\mu_1,\mu_2$. 
}
\end{summary}
\vspace{10pt}

\newpage

\section{Introduction and results}

%
%
It is found in many biological experiments that cells 
have the ability to adapt their migration in response 
to a chemical signal in their neighbourhood; 
especially, cells move towards higher concentration 
of a chemical substance which is produced by 
themselves. 
This mechanism plays a central role in mathematical 
biology, and has many applications in its variants 
and extensions (see \cite{B-B-T-W}). 

Let $n$ denote the density of the cells and $c$ present 
the concentration of the chemical signal. 
The mathematical model describing the 
above mechanism which was first proposed 
by Keller--Segel \cite{K-S} reads as
\begin{align*}
n_t = \Delta n - \nabla\cdot (n \nabla c), \quad 
c_t = \Delta c - c +n. 
\end{align*}
In the above system 
which is called Keller--Segel system, 
it is known that the size of initial data determine 
whether classical solutions of 
the above system exist globally or not 
(\cite{Xinru_higher,Horstmann-Wang,Nagai-Senba-Yoshida,win_aggregationvs}). 
In the 2-dimensional setting 
global classical solutions of the above system exist  
when the mass of an initial data $n_0$ is 
sufficiently small (\cite{Nagai-Senba-Yoshida}). 
On the other hand, 
there exist initial data such that 
the mass of the initial data is large enough and 
a solution blows up in finite time 
in a 2-dimensional bounded domain (\cite{Horstmann-Wang}). 
In the higher dimensional case 
it is known that, if 
an initial data $(n_0,c_0)$ is 
sufficiently 
small with respect to suitable Lebesgue norm, 
then global existence and boundedness of 
classical solutions hold 
(\cite{Xinru_higher,win_aggregationvs}). 
Then we expect that 
existence of blow-up solutions hold 
under some largeness condition for initial data 
also in the higher-dimensional case; 
however, Winkler \cite{win_aggregationvs} showed that 
for all $m>0$ there exist initial data $n_0$ 
such that $\lp{1}{n_0}=m$ and a solution blows up 
in finite time in the case that the domain is a ball 
in $\mathbb{R}^N$ with $N\ge 3$. 

As we mentioned above, we can see that 
whether solutions 
of the Keller--Segel system blow up or not 
depends on initial data. 
On the other hand, it is known that the logistic term 
can prevent solutions from blowing up even 
though a initial data is large enough 
(\cite{lankeit_evsmoothness,OTYM,Winkler_2010_logistic}). 
Osaki--Tsujikawa--Yagi--Mimura \cite{OTYM} 
obtained that, in the 2-dimensional case, 
the chemotaxis system with logistic source 
\begin{align*}
n_t=\Delta n - \chi\nabla \cdot (n\nabla c) + r n - \mu n^2, \quad
c_t= \Delta c -c +n, 
\end{align*} 
where $\chi,r,\mu>0$, 
possesses a unique global solution for all 
$\chi,r,\mu>0$ and all initial data. 
In the higher dimensional case, 
Winkler \cite{Winkler_2010_logistic,W2014} 
established existence of 
global classical solutions 
under the condition that $\mu>0$ is sufficiently large. 
Moreover, asymptotic behavior of the solutions 
was obtained: 
$n(\cdot,t)\to \frac{r}{\mu}$,  
$c(\cdot,t)\to \frac{r}{\mu}$
in $L^\infty(\Omega)$
as $t\to\infty$ (\cite{he_zheng}).  
Recently, Lankeit \cite{lankeit_evsmoothness} 
obtained global existence 
of weak solutions to 
the chemotaxis system with logistic source 
for all $r\in \mathbb{R}$ and any $\mu>0$ 
and the eventual smoothness of 
the solutions was derived if $r\in \mathbb{R}$ is small. 
More related works 
which deal with the Keller--Segel system and 
the chemotaxis system with logistic source 
can be found in \cite[Section 3]{B-B-T-W}. 

The chemotaxis system with logistic source is 
a {\it single-species} case, 
and has a classical solution which converges to 
the constant steady state 
$(\frac{r}{\mu},\frac{r}{\mu})$. 
On the other hand, 
the {\it two-species} chemotaxis-competition system 
\begin{gather*}
         (n_1)_t  =
          \Delta n_1 - \chi_1\nabla\cdot(n_1\nabla c) 
          + \mu_1n_1(1- n_1 - a_1n_2),
 \\
         (n_2)_t  =
          \Delta n_2 - \chi_2\nabla\cdot(n_2\nabla c) 
          + \mu_2n_2(1- a_2n_1 - n_2),
 \\
         c_t = \Delta c - c + \alpha n_1 + \beta n_2,
\end{gather*}
where $\chi_1,\chi_2,\mu_1,\mu_2,a_1,a_2,\alpha,\beta>0$, 
which describes the evolution of 
two competing species which react on a single chemoattractant, 
has different dynamics depending on $a_1$ and $a_2$ 
(\cite{Bai-Winkler_2016,Black-Lankeit-Mizukami_01,
Lin-Mu-Wang,Mizukami_DCDSB,Mizukami_improve,
stinner_tello_winkler,Tello_Winkler_2012}). 
In the 2-dimensional case Bai--Winkler 
\cite{Bai-Winkler_2016} obtained 
global existence of the above system for all parameters.  
In the higher-dimensional case global existence and boundedness 
in the above system were established 
in \cite{Lin-Mu-Wang,Mizukami_DCDSB}. 
Moreover, it was shown that the solutions of 
the above system have the same asymptotic behavior 
as the solutions of Lotka--Volterra competition model: 
$
 n_1(\cdot,t)\to \frac{1-a_1}{1-a_1a_2}$, 
$
 n_2(\cdot,t)\to \frac{1-a_2}{1-a_1a_2}$, 
$
 c(\cdot,t)\to \frac{\alpha(1-a_1)+\beta(1-a_2)}{1-a_1a_2}
$ 
in $L^\infty(\Omega)$ as $t\to\infty$ in the case that $a_1,a_2\in (0,1)$, and 
$
 n_1(\cdot,t)\to 0$, 
$
 n_2(\cdot,t)\to 1$, 
$
 c(\cdot,t)\to \beta
$ 
in $L^\infty(\Omega)$ as $t\to\infty$ in the case that 
$a_1\ge 1> a_2>0$ (\cite{Bai-Winkler_2016,Mizukami_DCDSB,Mizukami_improve}). 
More related works can be found in 
\cite{Black-Lankeit-Mizukami_01,Lin-Mu-Wang,Mizukami_PPEpro,stinner_tello_winkler,Tello_Winkler_2012}. 

Recently, 
the chemotaxis-fluid system
\begin{gather*}
         n_t + u\cdot\nabla n =
          \Delta n - \chi\nabla\cdot(n\nabla c) 
          + r n - \mu n^2,
 \\
         c_t + u\cdot\nabla c 
         = \Delta c + g(n,c),
 \\
        u_t  + \kappa (u\cdot\nabla) u 
          = \Delta u + \nabla P 
            + n\nabla\phi, 
            \quad \nabla\cdot u = 0, 
\end{gather*}
where $\chi>0$, $r,\mu\ge0$, $\kappa=0$ 
(the Stokes case) or 
$\kappa=1$ (the Navier--Stokes case),  
which is the chemotaxis system with the fluid influence 
according to the {\it Navier--Stokes} equation, 
was intensively studied 
(\cite{Li-Xiao,Tao-Winkler_2015_3D,
Tao-Winkler_2016_2D,W-2012}). 
In the case that $g(n,c)=-nc$ and $r=\mu=0$ 
(chemotaxis-Navier--Stokes system) 
Winkler \cite{W-2012} first overcame 
the difficulties of the chemotactic effect 
and the fluid influence, 
and showed existence of global classical solutions 
in the 2-dimensional case and 
global existence of weak solutions 
to the above system with $\kappa=0$ in the 3-dimensional setting; 
even though in the 3-dimensional chemotaxis-Stokes case, 
classical solutions were not found. 
In the case that $g(n,c)=-c+n$ and $\kappa=0$ 
(Keller--Segel-Stokes system) 
Li--Xiao \cite{Li-Xiao} showed global 
existence and boundedness 
under the smallness condition for 
the mass of initial data 
only in the 2-dimensional case. 
On the other hand, also in the chemotaxis-fluid system, 
the logistic source can be helpful 
for obtaining classical bounded solutions; 
global classical bounded solutions were established 
in the 2-dimensional Keller--Segel-Navier--Stokes 
system (\cite{Tao-Winkler_2016_2D}) 
and in the 3-dimensional Keller--Segel-Stokes system 
under the condition that $\mu$ is large enough 
(\cite{Tao-Winkler_2015_3D}). 
For more related works we refer to 
\cite[Section 4]{B-B-T-W}. 

As we discussed previously, 
the chemotaxis-competition system and 
the chemotaxis-fluid system were intensively studied. 
However, there are not many results on a coupled 
two-species chemotaxis-fluid system 
which have difficulties of the chemotaxis effect, 
the competitive kinetics and the fluid influence. 
Recently, the problem which is a combination of 
the chemotaxis-Navier--Stokes system 
and the chemotaxis-competition system was 
studied in the 
2-dimensional case and the 3-dimensional case 
(\cite{CKM,HKMY_1}); 
in the 2-dimensional case global existence 
and asymptotic behavior 
of classical bounded solutions to the 
two-species chemotaxis-Navier--Stokes system 
were established (\cite{HKMY_1}), and 
in the 3-dimensional case 
existence and stabilization of 
global classical bounded solutions to 
the two-species chemotaxis-Stokes system 
hold under the condition that 
$\mu_1,\mu_2$ are sufficiently large (\cite{CKM}). 
However, the two-species Keller--Segel-Stokes system 
with competitive kinetics 
 \begin{equation}\label{P}
     \begin{cases}
         (n_1)_t + u\cdot\nabla n_1 =
          \Delta n_1 - \chi_1\nabla\cdot(n_1\nabla c) 
          + \mu_1n_1(1- n_1 - a_1n_2),
         &x\in \Omega,\ t>0,
 \\[0.5mm]
         (n_2)_t + u\cdot\nabla n_2 =
          \Delta n_2 - \chi_2\nabla\cdot(n_2\nabla c) 
          + \mu_2n_2(1- a_2n_1 - n_2),
         &x\in \Omega,\ t>0,
 \\[0.5mm]
         c_t + u\cdot\nabla c = \Delta c - c + \alpha n_1 + \beta n_2,
          &x \in \Omega,\ t>0,
 \\[0.5mm]
        u_t   
          = \Delta u + \nabla P 
            + (\gamma n_1 + \delta n_2)\nabla\phi, 
            \quad \nabla\cdot u = 0, 
          &x \in \Omega,\ t>0, 
 \\[1mm]
        \partial_\nu n_1 
        = \partial_\nu n_2 = \partial_\nu c = 0, \quad 
        u = 0, 
        &x \in \partial\Omega,\ t>0, 
 \\[0.5mm]
        n_i(x,0)=n_{i,0}(x),\ 
        c(x,0)=c_0(x),\ u(x,0)=u_0(x), 
        &x \in \Omega,\ i=1,2,
     \end{cases}
 \end{equation}
\noindent
where $\Omega$ is a bounded domain, 
$\chi_1, \chi_2, a_1, a_2 \ge 0$ and 
$\mu_1, \mu_2, \alpha, \beta, \gamma, \delta > 0$ are 
constants, 
$n_{1,0}, n_{2,0}, c_0, u_0, \phi$ 
are known functions, 
has not been studied yet; 
we cannot apply the same argument as in \cite{CKM} 
because of lacking the $L^\infty$-information 
and only having $L^1$-estimate for $c$. 
in this case. 
Indeed, we cannot pick $r\in (1,3)$ such that  
\begin{align*}
\frac{5-\frac{3}{p+1}}{1-\frac{3}{(p+1)\theta}}\cdot
(p+1)\cdot \frac{\frac{1}{r}-\frac{3}{(p+1)\theta}}{\frac{1}{2}-\frac{3}{r}}
<2
\end{align*}
holds at \cite[(3.9)]{CKM} (in the case that 
we use $\lp{(p+1)\theta'}{\nabla c}\le 
C\lp{p+1}{\Delta c}^a\lp{1}{c}^{1-a}$ instead of 
\cite[(3.8)]{CKM}). 
The purpose of the present paper is to 
obtain global existence and asymptotic behavior 
in \eqref{P} in a 3-dimensional domain.  
In order to attain this purpose, we assume 
throughout this paper that the known functions 
$n_{1,0}, n_{2,0}, c_0, u_0, \phi$ satisfy
 \begin{align}\label{condi;ini1}
   &0 < n_{1,0}, n_{2,0} 
   \in C^0(\overline{\Omega}), 
 \quad 
   0 < c_0 \in W^{1,q}(\Omega), 
 \quad 
   u_0 \in D(A^{\vartheta}), 
  \\\label{condi;ini2}
   &\phi \in C^{1+\eta}(\overline{\Omega}) 
 \end{align}
for some $q > 3$, 
$\vartheta \in \left(\frac{3}{4}, 1\right)$, 
$\eta > 0$ and $A$ is 
the Stokes operator (see \cite{Sohr_NS}). 
Then the main results read as follows. 
The first theorem gives global existence and 
boundedness in \eqref{P}. 
%

\begin{thm}\label{maintheorem1}
Let $\Omega \subset \mathbb{R}^3$ be a 
bounded domain with smooth boundary and let $\chi_1, 
\chi_2>0$, $a_1, a_2 \ge 0$, 
$\mu_1, \mu_2, \alpha, \beta, \gamma, \delta > 0$. 
Assume that \eqref{condi;ini1} and \eqref{condi;ini2} 
are satisfied. 
Then there exists a constant $\xi_0 > 0$ such that 
whenever 
\[
\chi := \max\{\chi_1, \chi_2\} \quad \mbox{and}\quad 
\mu := \min\{\mu_1, \mu_2\}
\] 
satisfy $\frac{\chi}{\mu} < \xi_0$, 
there is a classical solution $(n_1, n_2, c, u, P)$ of the problem \eqref{P}
such that 
\begin{align*}
&n_1, n_2 \in C^0(\overline{\Omega}\times[0, \infty)) 
\cap C^{2, 1}(\overline{\Omega}\times(0, \infty)), 
\\
&c \in C^0(\overline{\Omega}\times[0, \infty)) 
\cap C^{2, 1}(\overline{\Omega}\times(0, \infty)) 
\cap L^{\infty}_{{\rm loc}}([0, \infty); W^{1, q}(\Omega)), 
\\
&u \in C^0(\overline{\Omega}\times[0, \infty)) 
\cap C^{2, 1}(\overline{\Omega}\times(0, \infty))
\cap L^\infty_{\rm loc}([0,\infty);D(A^{\vartheta})), 
\\  
&P \in C^{1, 0}(\overline{\Omega} \times (0, \infty)).
\end{align*} 
Also, the solution is unique in the sense that 
it allows up to 
addition of spatially constants to the pressure $P$. 
Moreover, 
the above solution is bounded in the following sense\/{\rm :}  
   \[
   \sup_{t>0} (\|n_1(\cdot, t)\|_{L^{\infty}(\Omega)} 
                     + \|n_2(\cdot, t)\|_{L^{\infty}(\Omega)} 
                     + \|c(\cdot, t)\|_{W^{1, q}(\Omega)} 
                     + \|u(\cdot, t)\|_{D(A^\vartheta)}) 
   < \infty.
   \]
\end{thm}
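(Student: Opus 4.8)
The plan is to prove Theorem \ref{maintheorem1} by first establishing a local classical solution together with a blow-up criterion, and then deriving global-in-time a priori bounds in which the smallness of $\chi/\mu$ is used to absorb the chemotactic and cross-diffusive contributions by the logistic dissipation. First I would construct, via a standard fixed-point argument built on the Neumann heat semigroup for $n_1,n_2,c$ and the Stokes semigroup for $u$ (as in the works cited for the related systems), a local classical solution on a maximal interval $[0,\tmax)$ enjoying the extensibility property: if $\tmax<\infty$, then $\lp{\infty}{n_1(\cdot,t)}+\lp{\infty}{n_2(\cdot,t)}+\|c(\cdot,t)\|_{W^{1,q}(\Omega)}+\|A^\vartheta u(\cdot,t)\|_{L^2(\Omega)}\to\infty$ as $t\nearrow\tmax$. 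Positivity of $n_1,n_2,c$ on $(0,\tmax)$ follows from the maximum principle together with the assumptions \eqref{condi;ini2} on the data; hence it suffices to bound the displayed norm on $(0,\tmax)$ to rule out finite-time blow-up.

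Next I would collect the elementary integral estimates. Integrating the first two equations of \eqref{P} over $\Omega$ (the convective terms vanish since $\nabla\cdot u=0$ and the chemotactic terms integrate to zero by the Neumann condition) and exploiting the structure $\mu_i n_i(1-\cdots)$ yields, through a one-dimensional ODE comparison, a uniform-in-time bound $\sup_{t<\tmax}(\lp{1}{n_1}+\lp{1}{n_2})<\infty$ together with the space-time estimate $\int_t^{t+1}\!\into(n_1^2+n_2^2)\le C$ uniformly in $t$. Testing the third equation then gives a bound on $\lp{1}{c}$. It is precisely at this point that the difficulty noted after \eqref{condi;ini2} appears: in contrast to the signal-consumption case, no maximum principle furnishes an $L^\infty$-bound for $c$, and only this $L^1$-information is at hand.

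The central and hardest step is an $L^p$-estimate for $n_1,n_2$ with some $p>3$. Testing the first two equations with $n_1^{p-1}$ and $n_2^{p-1}$ and applying Young's inequality to the chemotactic terms, I would arrive at a differential inequality of the schematic form $\frac{d}{dt}\into(n_1^p+n_2^p)+\kappa\sum_i\into n_i^{p-2}|\nabla n_i|^2\le C\chi^2\into(n_1^p+n_2^p)|\nabla c|^2-\sum_i\mu_i\into n_i^{p+1}+C\into(n_1^p+n_2^p)$, in which the quadratic-in-$\nabla c$ term is the sole obstruction to closing. To control $\into n_i^p|\nabla c|^2$ I would couple this with an estimate for $\into|\nabla c|^{2r}$ derived from the $c$-equation, and then invoke Hölder's inequality together with a Gagliardo--Nirenberg interpolation of $\nabla c$ between $\Delta c$ (equivalently $c_t+u\cdot\nabla c+c-\alpha n_1-\beta n_2$) and $\lp{1}{c}$, the latter being the only available low-order norm. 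The resulting superlinear production, which is of order $\into n_i^{p+1}$, is then dominated by the logistic dissipation $-\sum_i\mu_i\into n_i^{p+1}$ precisely when $\chi/\mu<\xi_0$ with $\xi_0$ chosen small enough. I expect the main obstacle to lie exactly here: because only $\lp{1}{c}$ (and not an $L^\infty$- or even $L^2$-bound) enters the interpolation, the admissible exponents are tightly constrained, and verifying that the coupled differential inequalities can be closed requires a careful bookkeeping of the Gagliardo--Nirenberg exponents that goes beyond the scheme available in the signal-consumption setting.

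Having secured $\sup_{t<\tmax}(\lp{p}{n_1}+\lp{p}{n_2})<\infty$ for some $p>3$, I would upgrade the remaining norms: the variation-of-constants representation for $c$ together with smoothing estimates for the Neumann heat semigroup gives a uniform bound on $\|c\|_{W^{1,q}(\Omega)}$, while the Stokes semigroup applied to the forcing $(\gamma n_1+\delta n_2)\nabla\phi$ yields a uniform bound on $\|A^\vartheta u\|_{L^2(\Omega)}$ with $\vartheta\in(\tfrac34,1)$. A Moser-type iteration (or a finite sequence of semigroup bootstrap steps) then promotes the $L^p$-bound to $\sup_{t<\tmax}(\lp{\infty}{n_1}+\lp{\infty}{n_2})<\infty$. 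These bounds contradict the blow-up alternative, forcing $\tmax=\infty$ and simultaneously delivering the asserted global boundedness. Finally, uniqueness up to additive spatial constants in $P$ follows from a routine energy estimate on the difference of two solutions, using the already-established regularity and Gronwall's lemma.
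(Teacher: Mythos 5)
Your overall architecture (local existence with an extensibility criterion, $L^1$ and space-time $L^2$ bounds from the logistic structure, an $L^p$-estimate for the cell densities closed by the smallness of $\chi/\mu$, then semigroup bootstrapping) matches the paper's. But there is a genuine gap at exactly the step you flag as the hardest one, and it is not a bookkeeping issue that careful exponent-chasing will resolve. You propose to control the chemotactic contribution by interpolating $\nabla c$ between $\Delta c$ and the only low-order quantity you have retained, namely $\lp{1}{c}$. The authors state explicitly (in the discussion following \eqref{condi;ini2}) that this is precisely the route that fails: with the interpolation $\|\nabla c\|_{L^{(p+1)\theta'}(\Omega)}\le C\|\Delta c\|_{L^{p+1}(\Omega)}^{a}\lp{1}{c}^{1-a}$ the resulting exponent condition on $r$ cannot be satisfied for any $r\in(1,3)$, so the superlinear production cannot be absorbed by the dissipation $-\mu\into n_i^{p+1}$ no matter how small $\chi/\mu$ is. The missing idea is Lemma \ref{lem;grad;c}: a uniform-in-time $L^2$-bound for $\nabla c$ (obtained by a separate energy argument in the spirit of Tao--Winkler, not by interpolation), which lets one interpolate $\nabla c$ in $L^{(p+1)\theta'}$ against $\|\nabla c\|_{L^2(\Omega)}$ instead of $\lp{1}{c}$; only then does the exponent $a$ become small enough for condition \eqref{keypoint} to admit a choice of $r\in(1,3)$. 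Without this intermediate estimate your scheme does not close.

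Two further, smaller discrepancies. First, the paper does not run a pointwise-in-time differential inequality involving $\into n_i^p|\nabla c|^2$ coupled with a bound for $\into|\nabla c|^{2r}$; instead it integrates by parts so that $\Delta c$ (not $|\nabla c|^2$) appears, and controls the exponentially weighted space-time integral $\int_{s_0}^t e^{(p+1)s}\into|\Delta c|^{p+1}$ by maximal Sobolev regularity for the (transformed) heat equation satisfied by $c$ and, for the term $u\cdot\nabla c$, by maximal Sobolev regularity for the Stokes equation (Lemmas \ref{pre2} and \ref{po}). This space-time (rather than pointwise) formulation is what makes the absorption argument in Lemma \ref{pote8} work. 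Second, you aim directly for $\lp{p}{n_i}$ with $p>3$; the constraint \eqref{keypoint} only permits $p\in(\tfrac32,2)$, which is nevertheless sufficient in three dimensions to bootstrap to $L^\infty$ via Lemmas \ref{pote9}--\ref{pote13}. Aiming for $p>3$ in one step would make the exponent conditions strictly harder and is not needed.
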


The second theorem asserts 
asymptotic behavior of solutions to \eqref{P}.


 \begin{thm}\label{maintheorem2}
Assume that the assumption of 
Theorem \ref{maintheorem1} is satisfied. 
Then the following properties hold\/{\rm :} 
   \begin{enumerate}
   \item[{{\rm (i)}}] In the case that $a_1, a_2 \in (0, 1)$, 
   under the condition that there exists $\delta_1>0$ 
   such that 
   \begin{align*}
     &4\delta_1 - (1+\delta_1)^2 a_1a_2 > 0 
     \quad \mbox{and}
   \\
     &\frac{\chi_1^2(1-a_1)}{4a_1\mu_1(1-a_1a_2)}
     +
     \frac{\delta_1\chi_2^2(1-a_2)}{4a_2\mu_2(1-a_1a_2)}
     <   \frac{4\delta_1 - (1+\delta)^2a_1a_2}
    {a_1\alpha^2 \delta + a_2\beta^2 -a_1a_2\alpha\beta (1+\delta_1)},
   \end{align*}
       the solution of the problem \eqref{P} converges to 
       a constant stationary solution of \eqref{P} as follows\/{\rm :}
       $$
       n_1(\cdot, t) \to N_1,\quad n_2(\cdot, t) \to N_2,\quad 
       c(\cdot, t) \to C^*,\quad 
       u(\cdot, t) \to 0 \quad \mbox{in}\ L^{\infty}(\Omega)\quad \mbox{as}\ t \to \infty, 
       $$
where 
       \[
       N_1 := \frac{1 - a_1}{1 - a_1a_2},\quad N_2 := \frac{1 - a_2}{1 - a_1a_2} 
       ,\quad C^* := \alpha N_1 + \beta N_2.
       \]
   \item[{{\rm (ii)}}] 
   In the case that $a_1 \geq 1 > a_2$, 
   under the condition that there exist 
   $\delta_1' > 0$ and $a_1' \in [1, a_1]$ such that 
  \begin{align*}
    &4\delta_1'-a_1'a_2(1+\delta_1')^2 > 0
    \quad \mbox{and}
  \\
    &\mu_2 > 
    \frac{\chi_2^2\delta_1'(\alpha^2 a_1'\delta_1' + \beta^2 a_2 
    - \alpha\beta a_1'a_2(1+\delta_1'))}{4a_2(4\delta_1'-a_1'a_2(1+\delta_1')^2)}, 
  \end{align*}
      the solution of the problem \eqref{P} converges to 
      a constant stationary solution of \eqref{P} as follows\/{\rm :}
      $$
      n_1(\cdot, t) \to 0,\quad n_2(\cdot, t) \to 1,\quad c(\cdot, t) \to \beta,\quad 
       u(\cdot, t) \to 0 \quad \mbox{in}\ L^{\infty}(\Omega)\quad \mbox{as}\ t \to \infty.
      $$
   \end{enumerate}
\end{thm}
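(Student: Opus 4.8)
The plan is to prove both convergence statements by constructing a single weighted Lyapunov functional that, along trajectories, dissipates into a negative definite quadratic form in the deviations from the respective equilibrium, and then to upgrade the resulting space--time $L^2$ decay to $L^\infty(\Omega)$ convergence by means of the uniform bounds of Theorem~\ref{maintheorem1}. For part~(i) I would take
\begin{align*}
E(t):=\frac{1}{\mu_1a_1}\int_\Omega\!\Big(n_1-N_1-N_1\ln\tfrac{n_1}{N_1}\Big)
+\frac{\delta_1}{\mu_2a_2}\int_\Omega\!\Big(n_2-N_2-N_2\ln\tfrac{n_2}{N_2}\Big)
+\frac{K_1}{2}\int_\Omega (c-C^*)^2+\frac{K_2}{2}\int_\Omega|u|^2,
\end{align*}
with constants $K_1,K_2>0$ to be chosen; $E\ge0$ and $E(0)<\infty$ because $\min_{\overline\Omega}n_{i,0}>0$. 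The particular weights $\tfrac{1}{\mu_1a_1}$ and $\tfrac{\delta_1}{\mu_2a_2}$ are dictated by the Lotka--Volterra block: they turn the competition cross--terms into a symmetric form whose positivity is exactly $4\delta_1-(1+\delta_1)^2a_1a_2>0$. For part~(ii), where $N_1=0$, I would replace the first entropy by the linear term $\tfrac{1}{\mu_1a_1'}\int_\Omega n_1$ and exploit $a_1'\le a_1$ together with $n_1>0$ through $n_1(1-n_1-a_1n_2)\le n_1(1-n_1-a_1'n_2)$; the decisive simplification is that testing the $n_1$--equation against the constant $1$ annihilates both the chemotactic and the transport terms, so that no chemotaxis contribution of the first species ever appears (which is why $\chi_1$ is absent from the hypothesis of~(ii)).

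Differentiating $E$ and integrating by parts --- using $\nabla\cdot u=0$, $u|_{\partial\Omega}=0$ and the homogeneous Neumann conditions, which eliminate every transport and pressure integral --- and substituting the equilibrium identities $N_1+a_1N_2=1$, $a_2N_1+N_2=1$, $C^*=\alpha N_1+\beta N_2$, I expect an expression of the form
\begin{align*}
\frac{dE}{dt}
&=-\frac{N_1}{\mu_1a_1}\int_\Omega\frac{|\nabla n_1|^2}{n_1^2}
 -\frac{\delta_1N_2}{\mu_2a_2}\int_\Omega\frac{|\nabla n_2|^2}{n_2^2}
 +\frac{\chi_1N_1}{\mu_1a_1}\int_\Omega\frac{\nabla n_1}{n_1}\cdot\nabla c
 +\frac{\delta_1\chi_2N_2}{\mu_2a_2}\int_\Omega\frac{\nabla n_2}{n_2}\cdot\nabla c \\
&\quad
 -K_1\int_\Omega|\nabla c|^2+\mathcal Q
 -K_2\int_\Omega|\nabla u|^2
 +K_2\int_\Omega u\cdot(\gamma(n_1-N_1)+\delta(n_2-N_2))\nabla\phi,
\end{align*}
where $\mathcal Q$ is the zeroth--order quadratic form in $(n_1-N_1,n_2-N_2,c-C^*)$. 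Absorbing the two chemotactic cross--terms into the entropy dissipation by Young's inequality produces a residual $\big(\tfrac{\chi_1^2N_1}{4\mu_1a_1}+\tfrac{\delta_1\chi_2^2N_2}{4\mu_2a_2}-K_1\big)\int_\Omega|\nabla c|^2$, whose nonpositivity forces $K_1$ to exceed precisely the left--hand side of the hypothesis of~(i). Completing the square in $c-C^*$ inside $\mathcal Q$ reduces it to a binary form whose discriminant equals $\tfrac{1}{4a_1a_2}\big[(4\delta_1-(1+\delta_1)^2a_1a_2)-K_1(a_1\alpha^2\delta_1+a_2\beta^2-a_1a_2\alpha\beta(1+\delta_1))\big]$, so negative definiteness of $\mathcal Q$ forces $K_1$ to stay below exactly the right--hand side of that hypothesis. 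Hence the two displayed inequalities in~(i) are equivalent to the existence of an admissible $K_1$, and the same computation --- now with only the species--$2$ chemotaxis term present and with $(a_1,\delta_1)$ replaced by $(a_1',\delta_1')$ --- reduces the single inequality in~(ii) to the same existence statement. Moreover the diagonal requirements $\tfrac{1}{a_1}>\tfrac{K_1\alpha^2}{4}$ and $\tfrac{\delta_1}{a_2}>\tfrac{K_1\beta^2}{4}$ turn out to be free, since $K_1<\text{RHS}$ already implies them via the squares $a_2[2\beta-a_1\alpha(1+\delta_1)]^2\ge0$ and $a_1[2\alpha\delta_1-a_2\beta(1+\delta_1)]^2\ge0$.

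It remains to dispose of the buoyancy term: by Poincaré's inequality $\|u\|_{L^2(\Omega)}^2\le C_P\|\nabla u\|_{L^2(\Omega)}^2$ and Young's inequality it is bounded by $\tfrac{K_2}{2}\int_\Omega|\nabla u|^2$ plus a multiple of $\int_\Omega((n_1-N_1)^2+(n_2-N_2)^2)$ that becomes arbitrarily small as $K_2\to0$; choosing $K_2$ small then leaves the strict definiteness of $\mathcal Q$ intact and yields
\begin{align*}
\frac{dE}{dt}\le -C_1\int_\Omega\big((n_1-N_1)^2+(n_2-N_2)^2+(c-C^*)^2+|u|^2\big)
\end{align*}
for some $C_1>0$. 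Integrating gives finiteness of $\int_0^\infty\!\int_\Omega(\cdots)\,dt$. Invoking the uniform bounds of Theorem~\ref{maintheorem1} together with standard parabolic estimates, one obtains uniform--in--time Hölder bounds and hence uniform continuity in $t$ of each of $t\mapsto\int_\Omega(n_1-N_1)^2$, $t\mapsto\int_\Omega(n_2-N_2)^2$, etc.; combined with their integrability this forces them to $0$, i.e.\ $L^2(\Omega)$--convergence to the equilibrium. A final Gagliardo--Nirenberg interpolation $\|f\|_{L^\infty(\Omega)}\le C\|f\|_{W^{1,q}(\Omega)}^{\theta}\|f\|_{L^2(\Omega)}^{1-\theta}$ (with $q>3$ for $c$, and the corresponding $C^2$, resp.\ $D(A^\vartheta)$, bounds for $n_i$, resp.\ $u$) upgrades this to $L^\infty(\Omega)$, which is the assertion.

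I expect the genuine difficulty to lie not in the algebra --- which, as indicated, closes exactly into the two stated conditions, the admissible window for $K_1$ being nonempty precisely under those hypotheses --- but in the simultaneous domination of the three distinct couplings by one functional and in the final convergence upgrade. The former is the conceptual crux: the competition is tamed by the weights $\tfrac{1}{\mu_1a_1},\tfrac{\delta_1}{\mu_2a_2}$, the chemotaxis by absorption into the entropy dissipation (pinning $K_1$ from below), and the fluid by Poincaré and smallness of $K_2$, and these must be arranged so as not to conflict. The latter is the remaining technical hurdle: turning the integral decay into pointwise--in--time $L^\infty$--convergence requires uniform--in--time regularity, and this is delicate here because $c$ is controlled only in $W^{1,q}(\Omega)$ rather than in a higher norm, so the equicontinuity and interpolation steps must be run within that limited regularity of the signal.
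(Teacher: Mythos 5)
Your proposal is correct and follows essentially the same route as the paper: a weighted Lyapunov functional combining the entropies $\int_\Omega(n_i-N_i-N_i\log\frac{n_i}{N_i})$ (replaced by $\int_\Omega n_1$ when $N_1=0$) with $\int_\Omega(c-C^*)^2$, whose weights are exactly those of the paper's $E_1$ and $E_2$ up to an overall normalization, the same cancellation of all transport terms via $\nabla\cdot u=0$, the same reduction of the hypotheses to the nonemptiness of the admissible window for the weight on the $c$-term (the paper's $\delta_2$, your $K_1$), and the same final upgrade from $\int_1^\infty\int_\Omega(\cdot)^2<\infty$ plus uniform H\"older bounds to $L^\infty$-convergence (Lemma \ref{regularity}). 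The one genuine divergence is your treatment of the fluid: you add $\frac{K_2}{2}\int_\Omega|u|^2$ to the functional and absorb the buoyancy term by Poincar\'e's inequality and smallness of $K_2$, obtaining $\int_0^\infty\int_\Omega|u|^2<\infty$ from the same differential inequality, whereas the paper keeps the functional fluid-free (since $u$ drops out of the energy identity entirely) and proves $u\to0$ separately in Lemma \ref{lem;decaycu} by the semigroup argument of \cite[Lemma 4.6]{CKM}. Your variant is self-contained and avoids that external lemma at the cost of a slightly more delicate choice of constants; it also has the merit of making explicit the algebra (in particular the cancellation of the $K_1^2$-terms in the discriminant and the redundancy of the diagonal conditions) that the paper outsources to \cite[Lemma 2.2]{Mizukami_improve} and \cite[Lemma 4.1]{Mizukami_DCDSB}.
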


The strategy for the proof of Theorem \ref{maintheorem1} 
is to confirm the $L^p$-estimates for $n_1$ and $n_2$ 
with $p>\frac{3}{2}$. 
By using the differential inequality we can obtain  
\begin{align*}
 \int_\Omega n_1^p +\int_\Omega n_2^p 
 \le C + C\int_{s_0}^t e^{-(p+1)(t-s)}\int_\Omega |\Delta c|^{p+1}  
 -C\int_{s_0}^{t}e^{-(p+1)(t-s)}
 \Bigl(\int_{\Omega}n_1^{p+1} + \int_{\Omega}n_2^{p+1} \Bigr)
\end{align*}
with some $C>0$ and $s_0>0$. 
We will control $\int_{s_0}^t e^{-(p+1)(t-s)}\int_\Omega |\Delta c|^{p+1}$ 
by applying the variation of 
the maximal Sobolev regularity (Lemma \ref{pre2}) 
for the third equation in \eqref{P}.  
Then we can obtain the $L^p$-estimates for $n_1$ and $n_2$. 
Here the keys of this strategy are the $L^2$-estimate 
for $\nabla c$ (Lemma \ref{lem;grad;c}) 
and the maximal Sobolev regularity for the 
Stokes equation (Lemma \ref{po}); 
these enable us to overcome difficulties of 
applying an argument 
similar to that in \cite{CKM}. 
On the other hand, 
the strategy for the proof of 
Theorem \ref{maintheorem2} is 
to confirm the following inequality: 
\begin{align}\label{st;ineq}
  \int_0^\infty\int_\Omega (n_1-N_1)^2 
  + \int_0^\infty\int_\Omega (n_2-N_2)^2 
  + \int_0^\infty\int_\Omega (c-C^*)^2   
  \le C
\end{align}
with some $C>0$, where $(N_1,N_2,C^*,0)$ 
is a constant stationary solution of \eqref{P}. 
In order to obtain this estimate 
we will find a nonnegative function $E$ satisfying 
\begin{align*}
 \frac{d}{dt}E(t)\le 
 -\ep \int_\Omega \left[(n_1-N_1)^2+(n_2-N_2)^2+(c-C^*)^2\right]
\end{align*} 
with some $\ep>0$. 
The above inequality and the positivity of $E(t)$ enable 
us to attain the desired estimate \eqref{st;ineq}. 

The plan of this paper is as follows. 
In Section 2 we collect basic facts which will be used later. 
Section 3 is devoted to proving 
global existence and boundedness (Theorem \ref{maintheorem1}). 
In Section 4 
we show asymptotic stability (Theorem \ref{maintheorem2}). 


\section{Preliminaries}

In this section we will give  
some results which 
will be used later. 
We can prove the following lemma which gives 
local existence of classical solutions to \eqref{P} by a straightforward adaptation of the reasoning in \cite[Lemma 2.1]{W-2012}.


 \begin{lem}\label{pre1}
 Let $\Omega \subset \mathbb{R}^3$ be a bounded domain with smooth boundary.  
Assume that \eqref{condi;ini1} and \eqref{condi;ini2} are satisfied. 
Then there exists $\tmax \in (0, \infty]$ such that 
the problem \eqref{P} possesses a classical solution 
$(n_1, n_2, c, u, P)$ satisfying   
   \begin{align*}
&n_1, n_2 \in C^0(\overline{\Omega}\times[0, \tmax)) 
\cap C^{2, 1}(\overline{\Omega}\times(0, \tmax)), \\ 
&c \in C^0(\overline{\Omega}\times[0, \tmax)) 
\cap C^{2, 1}(\overline{\Omega}\times(0, \tmax)) 
\cap L^{\infty}_{{\rm loc}}([0, \tmax); W^{1, q}(\Omega)), \\ 
&u \in C^0(\overline{\Omega}\times[0, \tmax)) 
\cap C^{2, 1}(\overline{\Omega}\times(0, \tmax)) 
\cap L^\infty_{{\rm loc}}([0, \tmax); D(A^{\vartheta})), 
\\  
&P \in C^{1, 0}(\overline{\Omega} \times (0, \tmax)), 
\\
&n_1,n_2>0,\quad c>0\quad \mbox{in}\ 
\Omega\times (0,\tmax).
  \end{align*}
Also, the above solution is unique up to addition of spatially constants 
to the pressure $P$. 
Moreover, either $\tmax = \infty$ or 
\[
    \|n_1(\cdot, t)\|_{L^{\infty}(\Omega)} + \|n_2(\cdot, t)\|_{L^{\infty}(\Omega)} 
    + \|c(\cdot, t)\|_{W^{1, q}(\Omega)} 
    + \|A^{\vartheta}u(\cdot, t)\|_{L^2(\Omega)} 
   \to \infty \quad \mbox{as}\ t\nearrow \tmax.
\]
 \end{lem}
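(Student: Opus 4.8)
The plan is to reformulate \eqref{P} as a fixed-point problem for the mild (integral) solution, to invoke the contraction mapping principle on a short time interval, and then to upgrade the construction to the asserted classical solution together with its positivity and extensibility criterion by means of parabolic regularity, the maximum principle and a standard continuation argument. Since the fourth equation is the \emph{Stokes} system (no convective term $(u\cdot\nabla)u$), the scheme is in fact lighter than in the Navier--Stokes setting of \cite{W-2012}, which is why a straightforward adaptation suffices.

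First I would fix $q>3$ and $\vartheta\in(\frac34,1)$ as in \eqref{condi;ini1} and work in the Banach space
\[
X_T:=C^0([0,T];C^0(\overline{\Omega}))^2\times C^0([0,T];W^{1,q}(\Omega))\times C^0([0,T];D(A^\vartheta)),
\]
on a suitable closed ball centred at the constant path $(n_{1,0},n_{2,0},c_0,u_0)$. Writing $e^{t\Delta}$ for the Neumann heat semigroup, $\mathcal{P}$ for the Helmholtz projection and $e^{-tA}$ for the Stokes semigroup, and exploiting $\nabla\cdot u=0$ to recast each drift term as $u\cdot\nabla w=\nabla\cdot(uw)$, I would define the solution map $\Phi=(\Phi_1,\Phi_2,\Phi_3,\Phi_4)$ via Duhamel's formula, namely
\[
\Phi_i(n_1,n_2,c,u)(t)=e^{t\Delta}n_{i,0}+\int_0^t e^{(t-s)\Delta}\bigl[-\nabla\cdot(u\,n_i)-\chi_i\nabla\cdot(n_i\nabla c)+\mu_i n_i(1-\cdots)\bigr]\,ds,
\]
\[
\Phi_3(n_1,n_2,c,u)(t)=e^{t(\Delta-1)}c_0+\int_0^t e^{(t-s)(\Delta-1)}\bigl[-\nabla\cdot(uc)+\alpha n_1+\beta n_2\bigr]\,ds,
\]
\[
\Phi_4(n_1,n_2,c,u)(t)=e^{-tA}u_0+\int_0^t e^{-(t-s)A}\mathcal{P}\bigl[(\gamma n_1+\delta n_2)\nabla\phi\bigr]\,ds,
\]
the pressure $P$ being recovered afterwards from the Helmholtz decomposition.

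The core is a collection of smoothing estimates for the two semigroups showing that $\Phi$ maps the ball into itself and is a contraction once $T$ is small; here the thresholds $q>3$ and $\vartheta>\frac34$ are precisely what render the time singularities integrable. For the chemotaxis term I would use $\|e^{t\Delta}\nabla\cdot f\|_{L^\infty(\Omega)}\le C\,t^{-\frac12-\frac{3}{2q}}\|f\|_{L^q(\Omega)}$ with $f=n_i\nabla c$, bounding $\|f\|_{L^q}\le\|n_i\|_{L^\infty}\|\nabla c\|_{L^q}$; since $q>3$ the exponent $\frac12+\frac{3}{2q}<1$ is integrable near $s=t$. The embedding $D(A^\vartheta)\hookrightarrow L^\infty(\Omega)$, valid for $\vartheta>\frac34$, lets me control $u$ and hence $u\,n_i$ and $uc$ in $L^\infty\subset L^q$, while $W^{1,q}\hookrightarrow L^\infty$ handles the products involving $c$. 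The $L^q$-norm of $\nabla c$ is propagated through $\|\nabla e^{t(\Delta-1)}g\|_{L^q}\le C\,t^{-1/2}e^{-t}\|g\|_{L^q}$, and the fluid part is closed by $\|A^\vartheta e^{-tA}\mathcal{P}g\|_{L^2}\le C\,t^{-\vartheta}\|g\|_{L^2}$ applied to $g=(\gamma n_1+\delta n_2)\nabla\phi\in L^\infty\subset L^2$, whose time singularity $t^{-\vartheta}$ is integrable because $\vartheta<1$.

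Having obtained a unique mild solution, I would extend it to a maximal interval $[0,\tmax)$ and bootstrap: once the mild solution is known, the right-hand sides become Hölder continuous, so parabolic Schauder theory for the scalar equations and analytic-semigroup/maximal-regularity theory for $u$ promote it to $C^{2,1}(\overline{\Omega}\times(0,\tmax))$ with $P\in C^{1,0}$. Positivity of $n_1,n_2,c$ follows from the maximum principle applied to the scalar parabolic equations with the strictly positive data in \eqref{condi;ini1} and homogeneous Neumann boundary values, and the extensibility dichotomy is the usual one: if the norm $\|n_1\|_{L^\infty}+\|n_2\|_{L^\infty}+\|c\|_{W^{1,q}}+\|A^\vartheta u\|_{L^2}$ remained bounded as $t\nearrow\tmax<\infty$, the fixed-point construction could be restarted with a uniform existence time, contradicting maximality. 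I expect the main obstacle to be the chemotaxis term: I must keep the product $n_i\nabla c$ in a space on which $e^{t\Delta}\nabla\cdot$ has an integrable singularity, which is exactly the borderline case forcing $\nabla c\in L^q$ with $q>3$, and the simultaneous book-keeping of $\nabla c$ in $L^q$, of $n_1,n_2$ feeding the $c$-forcing, and of $u$ entering the drift terms, is the delicate point that makes the contraction close.
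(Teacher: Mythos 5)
Your proposal is correct and follows essentially the same route as the paper, which proves this lemma simply by citing a straightforward adaptation of the contraction-mapping/Duhamel argument of \cite[Lemma 2.1]{W-2012}, followed by the standard bootstrap to classical regularity, the maximum principle for positivity, and the usual continuation argument for the extensibility criterion. Your explicit identification of where $q>3$ and $\vartheta\in(\tfrac34,1)$ enter the semigroup estimates matches the intended adaptation.
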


Given $s_0 \in (0, \tmax)$, 
we can derive from the regularity properties that 
  \[c(\cdot, s_0) \in C^2(\overline{\Omega})
\quad 
\mbox{with}\quad 
\partial_{\nu}c(\cdot, s_0) = 0 
\quad \mbox{on}\ \partial\Omega. 
\]
In particular, there exists 
$M = M(s_0)> 0$ such that 
   $$ 
   \|c(\cdot, s_0)\|_{W^{2, \infty}(\Omega)} \leq M
   $$
(see e.g., \cite{YCJZ-2015}). 

The following two lemmas provided as variations of 
the maximal Sobolev regularity 
hold keys for  
global existence and boundedness of solutions to \eqref{P}. 

\begin{lem}\label{pre2} 
Let $s_0 \in (0, \tmax)$. 
Then for all $p>1$ 
there exists a constant $C=C(p)>0$ 
such that 
\begin{align*}
  \int_{s_0}^{t}\int_{\Omega}e^{ps}|\Delta c|^p
  &\leq C\int_{s_0}^{t}\int_{\Omega}e^{ps}
       |c + \alpha n_1 + \beta n_2 - u\cdot \nabla c|^p 
\\&\quad\,
       + C e^{ps_0}
       (\|c(\cdot, s_0)\|_{L^p(\Omega)}^p 
       + \|\Delta c(\cdot, s_0)\|_{L^p(\Omega)}^p)
\end{align*}
holds for all $t\in (s_0,\tmax)$. 
\end{lem}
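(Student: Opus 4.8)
The plan is to absorb the zeroth-order term $-c$ by an exponential rescaling in time and then to invoke the maximal $L^{p}$-regularity of the Neumann realization of $1-\Delta$, the decay of whose semigroup is exactly what makes the initial-data term bounded uniformly in $t$. First I set $v(\cdot,t):=e^{t}c(\cdot,t)$; since the solution of Lemma \ref{pre1} is smooth on $\overline{\Omega}\times[s_{0},t]$ for every $t\in(s_{0},\tmax)$, all manipulations below are justified. Multiplying the third equation of \eqref{P} by $e^{t}$ and using $v_{t}=e^{t}(c_{t}+c)$ together with $\Delta v=e^{t}\Delta c$, a direct computation gives
\begin{align*}
 v_{t}=\Delta v-v+e^{t}\bigl(c+\alpha n_{1}+\beta n_{2}-u\cdot\nabla c\bigr)
 \quad\text{in }\Omega\times(s_{0},\tmax),
 \qquad \partial_{\nu}v=0 \ \text{on }\partial\Omega.
\end{align*}
Retaining the $-v$ on the right is deliberate: the forcing is then exactly $e^{t}g$ with $g:=c+\alpha n_{1}+\beta n_{2}-u\cdot\nabla c$, which is the integrand on the right of the claim, while $\|\Delta v(\cdot,s)\|_{L^{p}(\Omega)}^{p}=e^{ps}\into|\Delta c|^{p}$ reproduces the left-hand side. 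Thus it suffices to bound $\int_{s_{0}}^{t}\|\Delta v\|_{L^{p}(\Omega)}^{p}$.

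Next I would work with the sectorial operator $\mathcal{A}:=1-\Delta$ under homogeneous Neumann boundary conditions on $L^{p}(\Omega)$, which is positive with spectrum in $[1,\infty)$ and enjoys maximal $L^{p}$-regularity on the half-line (the Neumann Laplacian on a bounded smooth domain is $R$-sectorial, and adding $1$ preserves this). By Duhamel's formula,
\begin{align*}
 v(\cdot,s)=e^{-(s-s_{0})\mathcal{A}}v(\cdot,s_{0})
 +\int_{s_{0}}^{s}e^{-(s-\sigma)\mathcal{A}}\,e^{\sigma}g(\cdot,\sigma)\,d\sigma
 \qquad (s_{0}<s<t),
\end{align*}
so that $\mathcal{A}v$ splits into an initial-data part and a mild-solution part. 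For the latter, maximal $L^{p}$-regularity (applied on the half-line after extending $e^{\sigma}g$ by zero for $\sigma<s_{0}$, so that the constant is independent of $t$ and $s_{0}$) yields a $C>0$ with
\begin{align*}
 \int_{s_{0}}^{t}\Bigl\|\mathcal{A}\!\int_{s_{0}}^{s}e^{-(s-\sigma)\mathcal{A}}e^{\sigma}g(\cdot,\sigma)\,d\sigma\Bigr\|_{L^{p}(\Omega)}^{p}ds
 \le C\int_{s_{0}}^{t}\|e^{\sigma}g(\cdot,\sigma)\|_{L^{p}(\Omega)}^{p}\,d\sigma
 =C\int_{s_{0}}^{t}\!\into e^{p\sigma}|g|^{p}.
\end{align*}

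For the initial-data part I would use the decay estimate $\|e^{-\tau\mathcal{A}}\|_{L^{p}(\Omega)\to L^{p}(\Omega)}\le e^{-\tau}$, which holds because the Neumann heat semigroup is an $L^{p}$-contraction and the shift by $1$ contributes the factor $e^{-\tau}$. Since $c(\cdot,s_{0})\in C^{2}(\overline{\Omega})$ with $\partial_{\nu}c(\cdot,s_{0})=0$ by the regularity recorded after Lemma \ref{pre1}, we have $v(\cdot,s_{0})\in D(\mathcal{A})$ and $\mathcal{A}v(\cdot,s_{0})=e^{s_{0}}\bigl(c(\cdot,s_{0})-\Delta c(\cdot,s_{0})\bigr)$, whence
\begin{align*}
 \int_{s_{0}}^{t}\|e^{-(s-s_{0})\mathcal{A}}\mathcal{A}v(\cdot,s_{0})\|_{L^{p}(\Omega)}^{p}\,ds
 \le \|\mathcal{A}v(\cdot,s_{0})\|_{L^{p}(\Omega)}^{p}\int_{s_{0}}^{\infty}e^{-p(s-s_{0})}\,ds
 \le Ce^{ps_{0}}\bigl(\|c(\cdot,s_{0})\|_{L^{p}(\Omega)}^{p}+\|\Delta c(\cdot,s_{0})\|_{L^{p}(\Omega)}^{p}\bigr).
\end{align*}
Finally, since $\|\mathcal{A}^{-1}\|_{L^{p}(\Omega)\to L^{p}(\Omega)}\le 1$, the elementary bound $\|\Delta v\|_{L^{p}(\Omega)}=\|v-\mathcal{A}v\|_{L^{p}(\Omega)}\le 2\|\mathcal{A}v\|_{L^{p}(\Omega)}$ converts the two displays above into the asserted inequality, after combining them through $(a+b)^{p}\le 2^{p-1}(a^{p}+b^{p})$. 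The one genuinely delicate point is uniformity in $t$ up to $\tmax$, which may be infinite: without the exponential weight the initial-data contribution would grow like $t-s_{0}$, and it is precisely the decay $\|e^{-\tau\mathcal{A}}\|\le e^{-\tau}$ produced by the rescaling that furnishes the $t$-independent factor $e^{ps_{0}}$; one must likewise ensure that the maximal-regularity constant does not degenerate as $t-s_{0}$ grows, which is exactly what the half-line version guarantees.
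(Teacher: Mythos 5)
Your proof is correct and follows essentially the same route as the paper: the paper's own (very brief) argument also rescales by $e^{s}$ and invokes the maximal Sobolev regularity of Hieber--Pr\"uss for the resulting equation, exactly as you do. Your write-up merely supplies the details (the Duhamel splitting, the semigroup decay handling the initial-data term, and the conversion from $\|(1-\Delta)\widetilde{c}\,\|_{L^p(\Omega)}$ back to $\|\Delta \widetilde{c}\,\|_{L^p(\Omega)}$) that the paper delegates to the analogous lemma in \cite{CKM}.
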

\begin{proof}
The proof is similar to that of 
\cite[Lemma 2.2]{CKM}. 
Let $s_0\in (0,\tmax)$ and $t\in (s_0,\tmax)$. 
By using the transformation 
$\widetilde{c}(\cdot, s)=e^s c(\cdot, s)$, $s\in (s_0,t)$, 
and the maximal Sobolev regularity 
\cite[Theorem 3.1]{Hieber-Press_Maximal-Sobolev} for $\widetilde{c}$ 
we obtain this lemma. 
\end{proof}
\begin{lem}\label{po}
Let $s_0 \in (0,\tmax)$. Then 
for all $p > 1$ there exists a constant 
$C=C(p, s_0) > 0$ such that 
 \begin{align*}
     \int_{s_0}^{t}e^{ps}\int_{\Omega}|Au|^{p} 
     \leq C
     \left(
     \int_{s_0}^{t} e^{ps}\into |u|^p 
     + \int_{s_0}^{t} e^{ps}\into n_1^p 
     + \int_{s_0}^{t} e^{ps}\into n_2^p\right)  
     + C
     \end{align*}
for all $t \in (s_0, \tmax)$.
\end{lem}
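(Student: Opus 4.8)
The plan is to recast the Stokes subsystem of \eqref{P} as an abstract evolution equation driven by the Stokes operator $A$ and then to apply maximal $L^p$-regularity after removing the exponential weight by the same time transformation used in Lemma \ref{pre2}. First I would apply the Helmholtz (Leray) projection $\mathcal P$ onto the solenoidal fields to the fourth equation in \eqref{P}. Since $u$ is divergence free and vanishes on $\partial\Omega$, we have $\mathcal P u_s = u_s$, $\mathcal P\Delta u = -Au$ and $\mathcal P\nabla P = 0$, whence
\[
 u_s + Au = \mathcal P\bigl[(\gamma n_1 + \delta n_2)\nabla\phi\bigr]
 \quad \text{in }\ \Omega\times(0,\tmax),
\]
with an initial value $u(\cdot, s_0)$ supplied by Lemma \ref{pre1}.

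Next, following the device in the proof of Lemma \ref{pre2}, I would set $\widetilde u(\cdot, s) := e^s u(\cdot, s)$ for $s\in(s_0, t)$. Because $A$ does not depend on $s$ and commutes with multiplication by $e^s$, a direct computation gives
\[
 \widetilde u_s + A\widetilde u
 = \widetilde u + e^s\,\mathcal P\bigl[(\gamma n_1 + \delta n_2)\nabla\phi\bigr].
\]
I would then invoke the maximal $L^p$-regularity estimate for the Stokes operator (see \cite{Sohr_NS}), which produces a constant $C=C(p)>0$ such that
\[
 \int_{s_0}^t \lp{p}{A\widetilde u(\cdot, s)}^p\,ds
 \le C\int_{s_0}^t \lp{p}{\widetilde u(\cdot,s)
   + e^s\mathcal P[(\gamma n_1 + \delta n_2)\nabla\phi]}^p\,ds
 + C\,e^{ps_0}\|u(\cdot, s_0)\|_{D(A)}^p .
\]

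Finally I would undo the transformation. Since $A\widetilde u = e^s Au$ and $\widetilde u = e^s u$, the left-hand side equals $\int_{s_0}^t e^{ps}\into |Au|^p$ and the first term on the right equals $\int_{s_0}^t e^{ps}\into |u|^p$ up to the constant factor. For the forcing term I would use $(a+b)^p\le 2^{p-1}(a^p+b^p)$, the boundedness of $\mathcal P$ on $L^p(\Omega)$, the bound $\|\nabla\phi\|_{L^\infty(\Omega)}<\infty$ coming from $\phi\in C^{1+\eta}(\overline\Omega)$ in \eqref{condi;ini2}, and $(\gamma n_1+\delta n_2)^p\le 2^{p-1}(\gamma^p n_1^p+\delta^p n_2^p)$, so that the contribution of $e^s\mathcal P[(\gamma n_1+\delta n_2)\nabla\phi]$ is dominated by $C(\int_{s_0}^t e^{ps}\into n_1^p + \int_{s_0}^t e^{ps}\into n_2^p)$.

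As for obstacles, the computation is essentially routine once the weighted transformation is installed; the only genuine point requiring care is the functional-analytic setting of the maximal-regularity theorem. In particular, the projected forcing must be estimated in $L^p_\sigma(\Omega)$ rather than $L^p(\Omega)$, and the initial datum has to lie in the correct trace space for the parabolic problem, which in principle is delicate for large $p$ since only $u_0\in D(A^\vartheta)$ with $\vartheta\in(\tfrac34,1)$ is assumed at $t=0$. Here, however, this causes no trouble: by the smoothing asserted in Lemma \ref{pre1}, for the fixed $s_0>0$ the function $u(\cdot, s_0)$ belongs to $C^2(\overline\Omega)$, is solenoidal and vanishes on $\partial\Omega$, hence lies in $D(A)$, so the initial-data norm is a finite quantity depending only on $p$ and $s_0$. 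Absorbing this fixed term into the additive constant $C=C(p,s_0)$ then yields the assertion.
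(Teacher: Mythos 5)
Your argument is correct and follows essentially the same route as the paper: the exponential substitution $\widetilde u = e^s u$, reformulation via the Helmholtz projection as $\widetilde u_s + A\widetilde u = \mathcal P[\widetilde u + e^s(\gamma n_1+\delta n_2)\nabla\phi]$, and maximal $L^p$-regularity for the Stokes operator (the paper cites \cite[Theorem 2.7]{GS-1990}), with the initial term absorbed into $C(p,s_0)$ thanks to the smoothness of $u(\cdot,s_0)$ from Lemma \ref{pre1}. Your remark on the trace-space issue for the initial datum is a point the paper glosses over, and your resolution of it is the right one.
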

\begin{proof}
Letting $s_0\in (0,\tmax)$ and $t\in (s_0,\tmax)$ and 
putting $\widetilde{u}(\cdot, s):=e^{s}u(\cdot, s)$, 
$s\in (s_0,t)$, 
we obtain from the forth equation in \eqref{P} that 
$$
\widetilde{u}_{s} = \Delta \widetilde{u} + \widetilde{u} 
                       + e^{s}(\gamma n_1 + \delta n_2)\nabla \phi + e^{s}\nabla P, 
$$
which derives  
\[
\widetilde{u}_{s} + A\widetilde{u} 
= {\cal P}[\widetilde{u} + e^{s}(\gamma n_1 + \delta n_2)\nabla \phi], 
\] 
where ${\cal P}$ denotes the Helmholtz projection 
mapping $L^2(\Omega)$ onto its subspace 
$L^2_\sigma(\Omega)$ of all solenoidal 
vector field. 
Thus we derive from \cite[Theorem 2.7]{GS-1990} that 
there exist positive constants 
$C_1=C_1(p,s_0)$ and $C_2 = C_2(p,s_0)$ such that 
\begin{align*}
&\int_{s_0}^{t} \lp{p}{\widetilde{u}_s(\cdot,s)}^{p}\,ds 
+ \int_{s_0}^{t} \lp{p}{A\widetilde{u}(\cdot,s)}^{p}\,ds \\
&\leq C_1\Bigl(\int_{s_0}^{t} \lp{p}{\widetilde{u}(\cdot,s) 
+ e^{s}(\gamma n_1(\cdot,s) + \delta n_2(\cdot,s))\nabla \phi}^{p}\,ds  
+1 \Bigr)
\\
&\leq C_2\Bigl(\int_{s_0}^{t} e^{ps}\lp{p}{u(\cdot,s)}^p\,ds 
+ \int_{s_0}^{t} e^{ps}(\lp{p}{n_1(\cdot,s)}^p + \lp{p}{n_2(\cdot,s)}^p)\,ds  
+1 
\Bigr)
\end{align*}
for all $t\in (s_0,\tmax)$. 
Hence we can prove this lemma. 
\end{proof}

\section{Boundedness. Proof of Theorem \ref{maintheorem1}}

We will prove 
Theorem \ref{maintheorem1} 
by preparing a series of lemmas in this section.


\begin{lem}\label{pote1}
There exists a constant $C > 0$ such that 
for $i = 1, 2$, 
     \[
     \int_{\Omega} n_i(\cdot, t) \leq C \quad 
    \mbox{for all}\ t \in (0, \tmax)
     \]
and 
    \begin{equation*}
    \int_{t}^{t+\tau}\int_{\Omega}n_{i}^2 \le C \quad 
    \mbox{for all}\ t \in (0, \tmax-\tau), 
    \end{equation*}
where 
    $\tau := \min\{1, \frac{1}{2}\tmax\}$.
\end{lem}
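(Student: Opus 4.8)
The plan is to test the first two equations in \eqref{P} against the constant function $1$, thereby reducing the question of the $L^1$-mass to a scalar logistic ordinary differential inequality for $y_i(t):=\into n_i(\cdot,t)$. First I would integrate the $n_i$-equation over $\Omega$. The diffusion term $\into \Delta n_i$ vanishes by the homogeneous Neumann condition $\partial_\nu n_i=0$, and the chemotactic term $-\chi_i\into\nabla\cdot(n_i\nabla c)$ vanishes as well because $\partial_\nu c=0$ on $\partial\Omega$. For the convective term I would use $\nabla\cdot u=0$ together with $u=0$ on $\partial\Omega$ to rewrite $\into u\cdot\nabla n_i=\into\nabla\cdot(n_i u)=0$. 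What survives is only the logistic contribution, so for $n_1$ one obtains
\begin{align*}
  \frac{d}{dt}\into n_1
  =\mu_1\into n_1 -\mu_1\into n_1^2 -\mu_1 a_1\into n_1 n_2,
\end{align*}
and an analogous identity for $n_2$.

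Since $n_1,n_2>0$ in $\Omega\times(0,\tmax)$ by Lemma \ref{pre1} and $a_1,a_2\ge 0$, the mixed term is nonpositive and may be discarded, leaving
\begin{align*}
  \frac{d}{dt}\into n_i + \mu_i\into n_i^2 \le \mu_i\into n_i.
\end{align*}
To prove the first claim I would combine this with the Cauchy--Schwarz inequality $\bigl(\into n_i\bigr)^2\le|\Omega|\into n_i^2$, which turns it into the logistic inequality $y_i'\le \mu_i y_i-\frac{\mu_i}{|\Omega|}y_i^2$. A standard comparison argument with the corresponding logistic ODE then yields $y_i(t)\le\max\{\lp{1}{n_{i,0}},|\Omega|\}$ for all $t\in(0,\tmax)$, which is the desired uniform $L^1$-bound upon taking $C:=\max_i\max\{\lp{1}{n_{i,0}},|\Omega|\}$.

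For the space--time $L^2$-estimate I would integrate the differential inequality $\frac{d}{dt}\into n_i+\mu_i\into n_i^2\le\mu_i\into n_i\le\mu_i C$ over the interval $(t,t+\tau)$. Discarding the nonnegative endpoint value $\into n_i(\cdot,t+\tau)$ and invoking the $L^1$-bound already established for $\into n_i(\cdot,t)$, one finds
\begin{align*}
  \mu_i\int_t^{t+\tau}\into n_i^2
  \le \into n_i(\cdot,t) + \mu_i C\tau
  \le C+\mu_i C\tau,
\end{align*}
which is bounded uniformly in $t\in(0,\tmax-\tau)$ since $\tau\le 1$.

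This argument is essentially routine, and I do not anticipate a substantial obstacle: the logistic structure of the kinetics does all the work once the transport and chemotactic integrals are seen to vanish. The only point demanding care is precisely that verification, which rests on the boundary conditions $u=0$, $\partial_\nu n_i=\partial_\nu c=0$ together with the incompressibility $\nabla\cdot u=0$; the nonnegativity of the competition term, guaranteed by $a_1,a_2\ge0$ and the positivity of $n_1,n_2$ from Lemma \ref{pre1}, is what lets me pass from the exact identity to the one-sided inequality driving both estimates.
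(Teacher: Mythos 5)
Your proposal is correct and follows essentially the same route as the paper, which simply defers to the proof of \cite[Lemma 3.1]{HKMY_1}; that argument is exactly the one you give, namely integrating the $n_i$-equations over $\Omega$, using the boundary conditions and $\nabla\cdot u=0$ to eliminate the diffusive, chemotactic and convective terms, discarding the nonpositive competition term, and closing via Cauchy--Schwarz and an ODE comparison, with the space--time $L^2$ bound obtained by integrating the same differential inequality over $(t,t+\tau)$.
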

\begin{proof}
The above lemma can be proved 
by the same argument as in the proof of \cite[Lemma 3.1]{HKMY_1}.
\end{proof}
 \begin{lem}\label{lem;grad;c}
There exists a positive constant $C$ such that 
     $$
     \lp{2}{\nabla c(\cdot, t)} \leq C
     \quad 
     \mbox{for all}\ t\in (0,\tmax).
     $$
Moreover, for all $p \in [1, 6)$ 
there exists a positive constant $C(p)$ 
such that 
     $$
     \lp{p}{c(\cdot, t)} \leq C(p) 
     \quad 
     \mbox{for all}\ t\in (0,\tmax).
     $$
 \end{lem}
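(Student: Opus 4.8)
The plan is to establish the gradient bound first and to read off the $L^p$-bounds for $c$ as a consequence. Indeed, once $\lp{2}{\nabla c(\cdot,t)}\le C$ is available, I would integrate the third equation of \eqref{P} over $\Omega$; since $\into\Delta c=0$ and $\into u\cdot\nabla c=\into\nabla\cdot(cu)=0$ (using $\nabla\cdot u=0$ and $u=0$ on $\partial\Omega$), the mass $\into c$ obeys $\frac{d}{dt}\into c=-\into c+\alpha\into n_1+\beta\into n_2$, so the $L^1$-bound for $n_1,n_2$ from Lemma \ref{pote1} yields $\lp{1}{c(\cdot,t)}\le C$. Combining this with the gradient bound and the embedding $W^{1,2}(\Omega)\hookrightarrow L^6(\Omega)$ (valid since $\Omega\subset\mathbb{R}^3$) gives $\lp{6}{c(\cdot,t)}\le C$, and hence $\lp{p}{c(\cdot,t)}\le C(p)$ for every $p\in[1,6)$ by interpolation. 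Thus the whole difficulty is concentrated in the $L^2$-gradient estimate.

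Before attacking it I would record two preparatory bounds. First, testing the third equation by $c$ and noting that the convective term drops out, $\into(u\cdot\nabla c)c=\tfrac12\into u\cdot\nabla(c^2)=0$, I obtain $\tfrac{d}{dt}\tfrac12\lp{2}{c}^2+\lp{2}{\nabla c}^2+\lp{2}{c}^2=\into(\alpha n_1+\beta n_2)c$; estimating the right-hand side by Young's inequality and invoking the space--time $L^2$-bound $\int_t^{t+\tau}(\lp{2}{n_1}^2+\lp{2}{n_2}^2)\le C$ from Lemma \ref{pote1}, a standard ODE comparison argument gives a uniform bound $\sup_{t}\lp{2}{c(\cdot,t)}\le C$ (together with $\int_t^{t+\tau}\lp{2}{\nabla c}^2\le C$). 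Second, testing the Stokes equation by $Au$ (admissible since $u_0\in D(A^\vartheta)\subset D(A^{1/2})$) and using that the pressure term vanishes under the Helmholtz projection, I would derive $\tfrac{d}{dt}\lp{2}{\nabla u}^2+\lp{2}{Au}^2\le C(\lp{2}{n_1}^2+\lp{2}{n_2}^2)$; the same space--time $L^2$-information and the Poincaré inequality for the Stokes operator then yield $\sup_t\lp{2}{\nabla u(\cdot,t)}\le C$, and hence $\sup_t\lp{6}{u(\cdot,t)}\le C$ by Sobolev embedding.

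For the gradient estimate itself I would set $y(t):=\lp{2}{\nabla c(\cdot,t)}^2$ and differentiate; using $\partial_\nu c=0$ and the third equation, $\tfrac12 y'=-\lp{2}{\Delta c}^2-y-\into(\alpha n_1+\beta n_2)\Delta c+\into(u\cdot\nabla c)\Delta c$. The production term is routine: $|\into(\alpha n_1+\beta n_2)\Delta c|\le\tfrac14\lp{2}{\Delta c}^2+C(\lp{2}{n_1}^2+\lp{2}{n_2}^2)$. The convective term is the crux; I would bound it by Hölder's inequality as $\lp{6}{u}\,\lp{3}{\nabla c}\,\lp{2}{\Delta c}$, then interpolate $\lp{3}{\nabla c}\le C\lp{2}{\nabla c}^{1/2}\lp{2}{D^2c}^{1/2}$ by Gagliardo--Nirenberg and use the elliptic regularity estimate $\lp{2}{D^2c}\le C(\lp{2}{\Delta c}+\lp{2}{c})\le C(\lp{2}{\Delta c}+1)$ (thanks to the uniform $L^2$-bound for $c$). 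After absorbing the highest-order factor by Young's inequality this leaves $\tfrac12 y'\le-\tfrac12\lp{2}{\Delta c}^2+c_2\,y+c_3\,g(t)$ with $g(t):=\lp{2}{n_1}^2+\lp{2}{n_2}^2$ uniformly integrable in time. Finally I would exploit $y=\into|\nabla c|^2=-\into c\,\Delta c\le\lp{2}{c}\,\lp{2}{\Delta c}\le C\lp{2}{\Delta c}$ to upgrade the dissipation to a superlinear term, $\lp{2}{\Delta c}^2\ge c_1 y^2$, so that $\tfrac12 y'\le-c_1 y^2+c_2 y+c_3 g(t)$; a standard ODE lemma for such inequalities with uniformly integrable forcing then gives $\sup_t y(t)<\infty$, the desired bound.

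The main obstacle is exactly the convective term $\into(u\cdot\nabla c)\Delta c$, i.e. the fluid interaction highlighted in the introduction, which cannot be handled as in \cite{CKM} since no $L^\infty$-information on $c$ is available. Its resolution rests on the two preparatory estimates: the uniform $L^6$-bound for $u$ (through testing by $Au$), which makes the Hölder--Gagliardo--Nirenberg estimate close with only the top-order $\lp{2}{\Delta c}^2$ at stake, and the uniform $L^2$-bound for $c$, which both feeds the elliptic regularity estimate and turns the dissipation into a superlinear $-y^2$ term, thereby breaking the apparent circularity and allowing the differential inequality to be closed.
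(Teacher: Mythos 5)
Your proposal is correct and follows essentially the same route as the paper: the paper obtains the $L^1$-bound for $c$ from Lemma \ref{pote1}, delegates the $L^2$-gradient estimate to ``an argument similar to that in the proofs of \cite[Lemmas 2.5 and 2.6]{Tao-Winkler_2015_3D}'', and concludes by Gagliardo--Nirenberg interpolation, and your write-up is precisely a faithful reconstruction of those cited lemmas (the $L^2$-bound for $c$ by testing with $c$, the $\|\nabla u\|_{L^2(\Omega)}$-bound by testing the Stokes equation with $Au$, and the differential inequality for $\|\nabla c\|_{L^2(\Omega)}^2$ closed via the superlinear dissipation $\|\Delta c\|_{L^2(\Omega)}^2\ge c_1\bigl(\int_\Omega|\nabla c|^2\bigr)^2$ together with the standard ODE lemma for uniformly integrable forcing). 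The only cosmetic difference is that you pass through $W^{1,2}(\Omega)\hookrightarrow L^6(\Omega)$ and interpolate, while the paper interpolates directly between $\|\nabla c\|_{L^2(\Omega)}$ and $\|c\|_{L^1(\Omega)}$; both are fine.
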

\begin{proof}
Integrating the third equation in \eqref{P} 
over $\Omega$ 
together with the $L^1$-estimates 
for $n_1$ and $n_2$ 
provided by Lemma \ref{pote1} 
implies that there exists $C_{1}>0$ such that 
\begin{align}\label{esti;l1;c}
\lp{1}{c(\cdot,t)} \le C_{1} 
\end{align}
for all $t\in(0,\tmax)$. 
We next see from 
an argument similar to that in the proofs of 
\cite[Lemmas 2.5 and 2.6]{Tao-Winkler_2015_3D} that 
there is $C_{2}>0$ such that 
\begin{align}\label{esti;l2;nablac}
\lp{2}{\nabla c(\cdot,t)} \le C_{2} 
\quad \mbox{for all}\ t>0. 
\end{align} 
Thanks to \eqref{esti;l1;c} and 
\eqref{esti;l2;nablac}, 
we have from the Gagliardo--Nirenberg inequality that 
\[
\lp{p}{c(\cdot, t)} 
\leq C_3\lp{2}{\nabla c(\cdot,t)}^{a}\lp{1}{c(\cdot,t)}^{1-a} 
+ C_3\lp{1}{c(\cdot, t)}
\le C_3C_2^aC_1^{1-a}+ C_3C_1
\]
for all $t\in (0,\tmax)$ 
with some positive constant $C_3$, 
where $a = \frac{6}{5}(1-\frac{1}{p}) \in (0, 1)$. 
Thus we can prove this lemma. 
\end{proof}


\begin{lem}\label{pote3}
For $r \in (1, 3)$ there exists a constant 
$C=C(r) > 0$ such that 
     $$
     \|u(\cdot, t)\|_{L^r(\Omega)} \leq C
     \quad 
     \mbox{for all}\ t\in (0,\tmax).
     $$
\end{lem}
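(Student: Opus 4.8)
The plan is to represent $u$ through the variation-of-constants formula associated with the Stokes operator $A$ and then to invoke the $L^p$--$L^q$ smoothing estimates for the Stokes semigroup $(e^{-tA})_{t\ge 0}$, exploiting that the forcing term $(\gamma n_1+\delta n_2)\nabla\phi$ is uniformly bounded in $L^1(\Omega)$ by Lemma \ref{pote1}. Writing
\[
u(\cdot,t)=e^{-tA}u_0+\int_0^t e^{-(t-s)A}{\cal P}\bigl[(\gamma n_1(\cdot,s)+\delta n_2(\cdot,s))\nabla\phi\bigr]\,ds,
\]
where ${\cal P}$ is the Helmholtz projection, it suffices to bound each summand in $L^r(\Omega)$ uniformly for $t\in(0,\tmax)$. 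The restriction $r<3$ should enter exactly through the integrability of the semigroup smoothing kernel.

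For the first term I would use that $u_0\in D(A^{\vartheta})$ with $\vartheta\in(\tfrac34,1)$ embeds into $L^\infty(\Omega)\hookrightarrow L^r(\Omega)$ in the three-dimensional setting, together with the boundedness (indeed exponential decay) of the Stokes semigroup on $L^r_\sigma(\Omega)$, to get $\lp{r}{e^{-tA}u_0}\le C\lp{r}{u_0}\le C$. For the Duhamel term I would apply the smoothing estimate
\[
\lp{r}{e^{-\sigma A}{\cal P}f}\le C\,\sigma^{-\frac32(1-\frac1r)}e^{-\lambda\sigma}\lp{1}{f}
\]
with some $\lambda>0$, to $f=(\gamma n_1+\delta n_2)\nabla\phi$. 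Since $\phi\in C^{1+\eta}(\overline{\Omega})$ gives $\|\nabla\phi\|_{L^\infty(\Omega)}<\infty$ and Lemma \ref{pote1} provides $\lp{1}{n_1(\cdot,s)}+\lp{1}{n_2(\cdot,s)}\le C$ uniformly in $s$, we have $\lp{1}{f(\cdot,s)}\le C$, whence
\[
\Bigl\|\int_0^t e^{-(t-s)A}{\cal P}f(\cdot,s)\,ds\Bigr\|_{L^r(\Omega)}
\le C\int_0^t (t-s)^{-\frac32(1-\frac1r)}e^{-\lambda(t-s)}\,ds
\le C\int_0^\infty \sigma^{-\frac32(1-\frac1r)}e^{-\lambda\sigma}\,d\sigma.
\]
The last integral is finite precisely when $\frac32\bigl(1-\frac1r\bigr)<1$, i.e. when $r<3$, the exponential factor securing integrability at $\sigma=\infty$; combining the two bounds yields the claim, the lower constraint $r>1$ being harmless.

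The delicate point is the use of the smoothing estimate with exponent $q=1$, since the Helmholtz projection ${\cal P}$ is not bounded on $L^1(\Omega)$. I would resolve this by invoking the known fact that the \emph{composed} operator $e^{-\sigma A}{\cal P}$ nevertheless acts as a bounded map $L^1(\Omega)\to L^r(\Omega)$ for $1\le r<3$ with the stated decay, its kernel being regularized by the semigroup, as recorded in the Stokes-semigroup estimates employed in \cite{Tao-Winkler_2015_3D}. Should one prefer to avoid the $L^1$ endpoint entirely, the space-time bound $\int_t^{t+\tau}\into n_i^2\le C$ from Lemma \ref{pote1} can instead be combined with the $q=2$ version of the smoothing estimate and a decomposition of $(0,t)$ into unit subintervals, using the exponential decay of the kernel on the far intervals and a Cauchy--Schwarz estimate on the nearest one; this route is more involved but circumvents the projection issue and reaches the same conclusion.
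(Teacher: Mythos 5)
Your argument is correct and is essentially the paper's own proof written out in full: the paper simply defers to \cite[Corollary 3.4]{W-2015}, which is exactly this variation-of-constants representation combined with the $L^1$--$L^r$ smoothing estimates for the Stokes semigroup applied to the forcing $(\gamma n_1+\delta n_2)\nabla\phi$, with the restriction $r<3$ arising from the integrability of $\sigma^{-\frac32(1-\frac1r)}$ near $\sigma=0$. Your remark on the $L^1$-endpoint for the composed operator $e^{-\sigma A}\mathcal{P}$ is precisely the point handled in that reference, so no gap remains.
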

\begin{proof} 
The $L^r$-boundedness of $u$ for $r\in (1,3)$ can be obtained 
from the well-known Neumann heat semigroup estimates together with 
Lemma \ref{pote1} (for more details, see \cite[Corollary 3.4]{W-2015}). 
\end{proof}

Now we fix $s_0 \in (0, \tmax) \cap (0, 1]$.
We will obtain the $L^p$-estimates 
for $n_1$ and $n_2$ 
by preparing a series of lemmas. 

\begin{lem}\label{pote4}
For all $p > 1$, $\ep > 0$ and $\ell > 0$ 
there exists a constant $C=C(p) > 0$ 
such that
 \begin{align*}
     \frac{1}{p}\int_{\Omega}n_1^p 
     + \frac{1}{p}\int_{\Omega}n_2^p 
     &\leq - (\mu - \ep - \ell)e^{-(p+1)t}\int_{s_0}^{t}e^{(p+1)s}
     \Bigl(\int_{\Omega}n_1^{p+1} + \int_{\Omega}n_2^{p+1} \Bigr) \\ 
              &\quad\,+ C\ell^{-p}\chi^{p+1}e^{-(p+1)t}
                               \int_{s_0}^{t}e^{(p+1)s}\int_{\Omega}|\Delta c|^{p+1} 
                      + C
     \end{align*}
on $(s_0, \tmax)$, 
where $\mu = \min\{\mu_1,\mu_2\}$, 
$\chi = \max\{\chi_1, \chi_2\}$.
\end{lem}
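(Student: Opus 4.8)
The plan is to derive a pointwise-in-time differential inequality for $y(t):=\frac1p\int_\Omega n_1^p+\frac1p\int_\Omega n_2^p$ and then convert it into the claimed integral form by means of the integrating factor $e^{(p+1)t}$. First I would test the first equation in \eqref{P} by $n_1^{p-1}$ and integrate over $\Omega$, and likewise test the second by $n_2^{p-1}$. The convection term vanishes: since $\nabla\cdot u=0$ and $u=0$ on $\partial\Omega$, integration by parts gives $\int_\Omega n_i^{p-1}u\cdot\nabla n_i=\frac1p\int_\Omega u\cdot\nabla(n_i^p)=0$. The diffusion term yields the nonpositive contribution $-\frac{4(p-1)}{p^2}\int_\Omega|\nabla n_i^{p/2}|^2$ (the boundary term dropping by the Neumann condition), which I will simply discard. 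The logistic term produces $\mu_i\int_\Omega n_i^p-\mu_i\int_\Omega n_i^{p+1}$ together with the cross term $-\mu_i a_i\int_\Omega n_i^p n_j\le 0$, which I also discard.

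The decisive term is the chemotactic contribution. After two integrations by parts it becomes $-\chi_i\frac{p-1}{p}\int_\Omega n_i^p\,\Delta c\le\chi_i\frac{p-1}{p}\int_\Omega n_i^p|\Delta c|$. Here I would apply Young's inequality with the conjugate exponents $\frac{p+1}{p}$ and $p+1$, choosing the free parameter so that the $n_i^{p+1}$-part carries exactly the prescribed coefficient $\ell$; a direct computation then pins the remaining coefficient to the form $C(p)\chi_i^{p+1}\ell^{-p}$, giving $\chi_i\frac{p-1}{p}\int_\Omega n_i^p|\Delta c|\le\ell\int_\Omega n_i^{p+1}+C(p)\chi_i^{p+1}\ell^{-p}\int_\Omega|\Delta c|^{p+1}$. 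Getting this precise $\ell$- and $\chi$-dependence is the one calculation that must be carried out carefully, and it is the technical heart of the lemma.

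To manufacture the integrating factor I would add $\frac{p+1}{p}\int_\Omega n_i^p$ to both sides, so that the left-hand side becomes $e^{-(p+1)t}\frac{d}{dt}\bigl(e^{(p+1)t}\frac1p\int_\Omega n_i^p\bigr)$. The resulting linear terms $(\mu_i+\frac{p+1}{p})\int_\Omega n_i^p$ are absorbed, via Young's inequality and the finiteness of $|\Omega|$, into $\ep\int_\Omega n_i^{p+1}+C$. Collecting the $n_i^{p+1}$-terms and using $\mu_i\ge\mu=\min\{\mu_1,\mu_2\}$ turns their coefficient into $-(\mu-\ep-\ell)$, so that
\begin{align*}
\frac{d}{dt}\Big(e^{(p+1)t}\tfrac1p\int_\Omega n_i^p\Big)
\le e^{(p+1)t}\Big[-(\mu-\ep-\ell)\int_\Omega n_i^{p+1}
+C\chi_i^{p+1}\ell^{-p}\int_\Omega|\Delta c|^{p+1}+C\Big].
\end{align*}

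Finally I would integrate this from $s_0$ to $t$ and multiply back by $e^{-(p+1)t}$. The initial contribution $e^{-(p+1)(t-s_0)}\frac1p\int_\Omega n_i(\cdot,s_0)^p$ is bounded by a constant because $n_i(\cdot,s_0)\in C^0(\overline\Omega)$, and the $e^{(p+1)s}$-integral of the constant $C$ is controlled by $\frac{C}{p+1}e^{(p+1)t}$; both feed into the final $+C$. Summing the two inequalities for $i=1,2$ and estimating $\chi_1,\chi_2\le\chi$ to merge the two $|\Delta c|^{p+1}$-terms then yields exactly the asserted inequality.
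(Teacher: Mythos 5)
Your proof is correct and follows the standard testing argument (multiply by $n_i^{p-1}$, discard diffusion and cross-competition terms, Young's inequality on the chemotactic term with the $\ell$--$\ell^{-p}\chi_i^{p+1}$ split, integrating factor $e^{(p+1)t}$), which is exactly the route the paper takes by deferring to \cite[Lemma 3.4]{CKM}. The only cosmetic caveat is that the trailing constant you produce when absorbing $(\mu_i+\frac{p+1}{p})\int_\Omega n_i^p$ into $\ep\int_\Omega n_i^{p+1}$ also depends on $\ep$ (and $|\Omega|$, $\mu_i$), not on $p$ alone, but this imprecision is already present in the lemma's statement and is harmless for its later use.
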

\begin{proof}
We can prove this lemma by using the same argument 
as in the proof of \cite[Lemma 3.4]{CKM}. 
\end{proof}

\begin{lem}\label{pote5}
For all $p \in (\frac{3}{2}, 2)$ there exists a constant 
$C=C(p) > 0$ such that 
 \begin{align*}
     \int_{s_0}^{t}e^{(p+1)s}\int_{\Omega}|\Delta c|^{p+1} 
      &\leq C\int_{s_0}^{t}e^{(p+1)s}
     \left(\into n_1^{p+1} + 
     \into n_2^{p+1}+\into |Au|^{p+1} \right) 
      + Ce^{(p+1)t} + C
     \end{align*}
for all $t \in (s_0, \tmax)$.
\end{lem}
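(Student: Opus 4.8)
The plan is to apply the variation of the maximal Sobolev regularity (Lemma \ref{pre2}) with exponent $p+1$ to the third equation of \eqref{P}, and then to estimate the four resulting source contributions separately, the only delicate one being the convective term $u\cdot\nabla c$. First I would invoke Lemma \ref{pre2} with $p$ replaced by $p+1$ to obtain
$$
\int_{s_0}^t e^{(p+1)s}\into|\Delta c|^{p+1}
\leq C\int_{s_0}^t e^{(p+1)s}\into|c+\alpha n_1+\beta n_2-u\cdot\nabla c|^{p+1}+C,
$$
where the boundary contribution $Ce^{(p+1)s_0}(\lp{p+1}{c(\cdot,s_0)}^{p+1}+\lp{p+1}{\Delta c(\cdot,s_0)}^{p+1})$ is a finite constant since $c(\cdot,s_0)\in W^{2,\infty}(\Omega)$. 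Expanding the power of the sum by the elementary inequality $|a_1+\dots+a_4|^{p+1}\le C\sum_i|a_i|^{p+1}$ splits the right-hand side into four integrals.

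The terms carrying $n_1^{p+1}$ and $n_2^{p+1}$ are already of the desired form. For the $c$-term I would use that $p+1\in(\tfrac52,3)\subset[1,6)$, so the second part of Lemma \ref{lem;grad;c} gives $\lp{p+1}{c(\cdot,s)}\le C$; hence $\int_{s_0}^t e^{(p+1)s}\into|c|^{p+1}\le C\int_{s_0}^t e^{(p+1)s}\,ds\le Ce^{(p+1)t}$, which is absorbed into the $Ce^{(p+1)t}$ term.

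The remaining and decisive step is the estimate of $\int_{s_0}^t e^{(p+1)s}\into|u\cdot\nabla c|^{p+1}$. I would bound $|u\cdot\nabla c|\le|u|\,|\nabla c|$ and apply H\"older's inequality with a pair of exponents $\sigma,s$ satisfying $\frac1\sigma+\frac1s=\frac1{p+1}$ to get $\lp{\sigma}{u}^{p+1}\lp{s}{\nabla c}^{p+1}$. For $\nabla c$ I would use the Gagliardo--Nirenberg inequality together with the elliptic estimate $\lp{p+1}{D^2c}\le C(\lp{p+1}{\Delta c}+1)$ (valid under the Neumann condition, using that $\lp{p+1}{c}$ is bounded) and the $L^2$-bound for $\nabla c$ from Lemma \ref{lem;grad;c}, obtaining $\lp{s}{\nabla c}^{p+1}\le C\lp{p+1}{\Delta c}^{a(p+1)}+C$. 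For $u$ I would use the Gagliardo--Nirenberg inequality with the $L^r$-bound of Lemma \ref{pote3} (for some $r\in(1,3)$) and the fact that $\lp{p+1}{Au}$ controls $\lp{p+1}{D^2u}$, obtaining $\lp{\sigma}{u}^{p+1}\le C\lp{p+1}{Au}^{b(p+1)}+C$. Multiplying these and applying Young's inequality then yields, for any $\eta>0$, a pointwise-in-time bound of the form $\eta\lp{p+1}{\Delta c}^{p+1}+C\lp{p+1}{Au}^{p+1}+C$; integrating against $e^{(p+1)s}$ and choosing $\eta$ small enough lets me absorb the $\Delta c$-contribution into the left-hand side, leaving exactly the asserted right-hand side. (This is also why the statement keeps $\int|Au|^{p+1}$ on the right: it is closed afterwards through Lemma \ref{po}.)

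The hard part will be the bookkeeping in this last step: one must exhibit admissible Gagliardo--Nirenberg exponents $a,b\in(0,1)$ and a H\"older pair $(\sigma,s)$ for which simultaneously $\frac1\sigma+\frac1s=\frac1{p+1}$ and $a+b\le 1$, so that after Young's inequality the power $\frac{b(p+1)}{1-a}$ of $\lp{p+1}{Au}$ does not exceed $p+1$ (allowing it to be kept on the right, via $x^\theta\le x^{p+1}+1$ for $0\le\theta\le p+1$), while the power of $\lp{p+1}{\Delta c}$ is exactly $p+1$ (allowing absorption). This compatibility is precisely where the restriction $p\in(\tfrac32,2)$ and the three-dimensionality of $\Omega$ enter; verifying it is the crux of the argument, while everything else is the routine combination of the H\"older, Gagliardo--Nirenberg and Young inequalities.
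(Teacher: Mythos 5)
Your proposal follows essentially the same route as the paper's proof: Lemma \ref{pre2} at exponent $p+1$, splitting off the $c$, $n_1$, $n_2$ contributions via Lemma \ref{lem;grad;c}, and treating $u\cdot\nabla c$ by H\"older, Gagliardo--Nirenberg bounds in terms of $\lp{p+1}{\Delta c}$ and $\lp{p+1}{Au}$ (using Lemma \ref{pote3} for $u$), and Young's inequality with absorption of the $\Delta c$-term. The exponent compatibility you single out as the crux is exactly the paper's condition \eqref{keypoint}, which the paper likewise states and asserts (rather than verifies in detail) can be met by a suitable $r\in(1,3)$ when $p\in(\frac{3}{2},2)$.
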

\begin{proof}
Fix $\theta \in (\frac{3}{2}, 2)$ and 
put $\theta' = \frac{\theta}{\theta - 1} \in (2, 3)$.
From Lemma \ref{pre2} we have  
      \begin{align}\label{pote1.8}
      \int_{s_0}^{t}e^{(p+1)s}\int_{\Omega}|\Delta c|^{p+1} 
      &\leq C_{1}\int_{s_0}^{t}e^{(p+1)s}
      \int_{\Omega}
      |c + \alpha n_1 + \beta n_2 - u\cdot\nabla c|^{p+1} 
      \\ \notag 
          &\quad\,+ C_{1}e^{(p+1)s_0}
          \|c(\cdot, s_0)\|_{W^{2, p+1}(\Omega)}^{p+1}
      \end{align}
with some positive constant $C_{1}=C_{1}(p)$. 
It follows from Lemma \ref{lem;grad;c} and the H\"older inequality that 
there exists a positive constant $C_{2}=C_{2}(p)$ 
such that 
      \begin{align}\label{pote1.8tochu}
      &\int_{s_0}^{t}e^{(p+1)s}\int_{\Omega}
      |c + \alpha n_1 + \beta n_2 - u\cdot\nabla c|^{p+1} 
      \\ \notag 
      &\leq C_{2}\int_{s_0}^{t}e^{(p+1)s}
      \int_{\Omega} 
          (n_1^{p+1} + n_2^{p+1} + |u\cdot\nabla c|^{p+1}) 
          + C_{2}e^{(p+1)t} 
      \\ \notag 
      &\leq C_{2}\int_{s_0}^{t}e^{(p+1)s}
     \left(\into n_1^{p+1} + \into n_2^{p+1}\right) 
     \\ \notag
     &\quad\,
     + C_{2}\int_{s_0}^{t}e^{(p+1)s} 
     \|u(\cdot,s)\|_{L^{(p+1)\theta}(\Omega)}^{p+1}
     \|\nabla c(\cdot,s)\|_{L^{(p+1)\theta'}(\Omega)}^{p+1}\,ds 
+ C_{2}e^{(p+1)t}.
      \end{align}
Here the Gagliardo--Nirenberg inequality and Lemma \ref{lem;grad;c} 
imply  
      \begin{align}\label{pote1.9}
      \|\nabla c\|_{L^{(p+1)\theta'}(\Omega)}^{p+1} 
      &\leq C_{3}\|\Delta c\|_{L^{p+1}(\Omega)}^{a(p+1)}
                                          \|\nabla c\|_{L^2(\Omega)}^{(1-a)(p+1)} 
             + C_{3}\|\nabla c\|_{L^2(\Omega)}^{p+1} \\ \notag 
      &\leq C_{4}\|\Delta c\|_{L^{p+1}(\Omega)}^{a(p+1)} 
      + C_{4}
      \end{align} 
with some constants $C_{3}=C_{3}(p) > 0$ and 
$C_{4}=C_{4}(p) > 0$, 
where 
\[
 a := \frac{3-\frac{6}{(p+1)\theta'}}{5-\frac{6}{p+1}} \in (0, 1). 
 \]
We derive from 
\eqref{pote1.8}, \eqref{pote1.8tochu}, \eqref{pote1.9} and the Young inequality 
that there exists 
a positive constant $C_{5}=C_{5}(p)$ 
such that 
     \begin{align*}
     \int_{s_0}^{t}e^{(p+1)s}
     \int_{\Omega}|\Delta c|^{p+1} 
     &\leq C_{5}\int_{s_0}^{t}e^{(p+1)s}
     \left(\into n_1^{p+1} + \into n_2^{p+1}\right) 
     + a\int_{s_0}^{t}e^{(p+1)s}\into |\Delta c|^{p+1} 
\\\notag
     &\quad\,
     + C_{5}\int_{s_0}^{t}e^{(p+1)s} 
     \left(
     \|u(\cdot,s)\|_{L^{(p+1)\theta}(\Omega)}^{\frac{p+1}{1-a}}
     + \|u(\cdot,s)\|_{L^{(p+1)\theta}(\Omega)}^{p+1}
     \right)
     \,ds 
\\ \notag
     &\quad\,
          + C_{5}e^{(p+1)t}
          + C_{5}.
     \end{align*} 
Hence we can obtain that    
     \begin{align}\label{pote1.11}
     \int_{s_0}^{t}e^{(p+1)s}
     \int_{\Omega}|\Delta c|^{p+1} 
     &\leq 
     \frac{C_{5}}{1-a}\int_{s_0}^{t}e^{(p+1)s}
     \left(\into n_1^{p+1} + \into n_2^{p+1}\right) 
\\\notag
     &\quad\,
     + \frac{C_{5}}{1-a}\int_{s_0}^{t}e^{(p+1)s} 
     \left(
     \|u(\cdot,s)\|_{L^{(p+1)\theta}(\Omega)}^{\frac{p+1}{1-a}}
     + \|u(\cdot,s)\|_{L^{(p+1)\theta}(\Omega)}^{p+1}
     \right)
     \,ds 
\\ \notag
     &\quad\,
          + \frac{C_{5}}{1-a}e^{(p+1)t}
          + \frac{C_{5}}{1-a}.     
\end{align} 
Here we can choose $r \in (1, 3)$ such that 
     \begin{equation}\label{keypoint}
     \frac{5-\frac{6}{p+1}}{2-\frac{6}{(p+1)\theta}}\cdot
        \frac{\frac{1}{r} - \frac{1}{(p+1)\theta}}{\frac{2}{3} + \frac{1}{r} - \frac{1}{p+1}} 
     < 1
     \end{equation}
holds. 
Now it follows from 
the Gagliardo--Nirenberg inequality, Lemma \ref{pote3} 
and the Young inequality 
that there exists a constant $C_{6}=C_{6}(p)>0$ 
such that 
     \begin{align}\label{pote1.12}
     \|u(\cdot, s)\|_{L^{(p+1)\theta}(\Omega)}^{\frac{p+1}{1-a}} 
     &\leq \|Au(\cdot, s)\|_{L^{p+1}(\Omega)}^{\frac{p+1}{1-a}b}
     \|u(\cdot, s)\|_{L^r(\Omega)}^{\frac{p+1}{1-a}(1-b)} 
     \\\notag
     &\leq C_{6} + C_{6}\|Au(\cdot, s)\|_{L^{p+1}(\Omega)}^{p+1} 
     \end{align} 
with 
\[
  b := \frac{\frac{1}{r} - \frac{1}{(p+1)\theta}}{\frac{2}{3} + \frac{1}{r} 
              - \frac{1}{p+1}} 
\in (0, 1),\] 
because 
\[
  \frac{p+1}{1-a}b 
  = (p+1)\cdot
  \frac{5-\frac{6}{p+1}}{2-\frac{6}{(p+1)\theta}}\cdot
  \frac{\frac{1}{r} - \frac{1}{(p+1)\theta}}{\frac{2}{3} + \frac{1}{r} - \frac{1}{p+1}} 
  < p+1
\] 
holds from \eqref{keypoint}. 
We moreover have from 
the fact $\frac{1}{1-a}>1$, 
the Young inequality 
and \eqref{pote1.12} 
that
     \begin{equation}\label{pote1.12.1}
     \|u(\cdot, s)\|_{L^{(p+1)\theta}(\Omega)}^{p+1} 
     \leq C_{7} + C_{7}\|Au(\cdot, s)\|_{L^{p+1}(\Omega)}^{p+1}
     \end{equation}
for all $s \in (s_0, \tmax)$ with some constant $C_{7}=C_{7}(p)>0$. 
Thus, combining \eqref{pote1.11}, \eqref{pote1.12} and \eqref{pote1.12.1}, 
we see that 
there exists a constant $C_{8}=C_{8}(p) > 0$ 
such that 
\begin{align*}
     \int_{s_0}^{t}e^{(p+1)s}\int_{\Omega}|\Delta c|^{p+1} 
      &\leq C_8\int_{s_0}^{t}e^{(p+1)s}
     \left(\into n_1^{p+1} + 
     \into n_2^{p+1}\right) 
     \\&\quad\,
      +C_8\int_{s_0}^{t}e^{(p+1)s}\into |Au|^{p+1} 
      +C_8e^{(p+1)t} + C_8
     \end{align*}
for all $t \in (s_0, \tmax)$, 
which concludes the proof of this lemma. 
\end{proof}


We give the following lemma 
to control $\int_{s_0}^{t}e^{(p+1)s}\int_{\Omega}|\Delta c|^{p+1}$.
\begin{lem}\label{pote6}
For all $p \in (\frac{3}{2}, 2)$ there exists a constant 
$C=C(p) > 0$ such that 
 \begin{align*}
     \int_{s_0}^{t}e^{(p+1)s}\int_{\Omega}|Au|^{p+1}   
     &\leq 
     C\int_{s_0}^{t}e^{(p+1)s}\Bigl(\int_{\Omega}n_1^{p+1} + \int_{\Omega}n_2^{p+1} \Bigr)
     + Ce^{(p+1)t} +C
     \end{align*}
for all $t \in (s_0, \tmax)$.
\end{lem}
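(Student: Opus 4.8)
The plan is to apply the maximal Sobolev regularity for the Stokes equation (Lemma \ref{po}) with the exponent $p+1$ in place of $p$, and then to dispose of the resulting $\into |u|^{p+1}$-term by exploiting that the range $p \in (\frac{3}{2}, 2)$ forces $p+1 \in (\frac{5}{2}, 3) \subset (1, 3)$, so that Lemma \ref{pote3} already furnishes a uniform bound on $\lp{p+1}{u(\cdot,s)}$.

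First I would note that $p+1 > 1$, so Lemma \ref{po} is applicable with $p$ replaced by $p+1$, yielding a constant $C_1 = C_1(p, s_0) > 0$ with
\begin{align*}
     \int_{s_0}^{t}e^{(p+1)s}\into |Au|^{p+1}
     \leq C_1\left(
     \int_{s_0}^{t} e^{(p+1)s}\into |u|^{p+1}
     + \int_{s_0}^{t} e^{(p+1)s}\into n_1^{p+1}
     + \int_{s_0}^{t} e^{(p+1)s}\into n_2^{p+1}\right)
     + C_1
\end{align*}
for all $t \in (s_0, \tmax)$. The only term not already in the desired form is the first one on the right-hand side.

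The key observation is that, since $p < 2$, we have $p+1 < 3$, so Lemma \ref{pote3} applies with $r = p+1 \in (1, 3)$ and provides a constant $C_2 = C_2(p) > 0$ such that $\lp{p+1}{u(\cdot,s)} \le C_2$ for all $s \in (0, \tmax)$. Consequently $\into |u(\cdot,s)|^{p+1} \le C_2^{p+1}$, and integrating the exponential weight gives
\begin{align*}
\int_{s_0}^{t} e^{(p+1)s}\into |u|^{p+1}\,ds
\le C_2^{p+1}\int_{s_0}^{t} e^{(p+1)s}\,ds
\le \frac{C_2^{p+1}}{p+1}\, e^{(p+1)t}.
\end{align*}
Substituting this into the inequality above and collecting constants yields the claimed estimate with an appropriate $C = C(p) > 0$.

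I do not expect a genuine obstacle here: unlike Lemma \ref{pote5}, where controlling $\lp{(p+1)\theta}{u}$ required the full Gagliardo--Nirenberg interpolation against $\lp{p+1}{Au}$ and the delicate exponent bookkeeping of \eqref{keypoint}, the present lemma needs only the plain $L^{p+1}$-norm of $u$, which sits inside the safe range $(1,3)$ covered by Lemma \ref{pote3}. The one point to record carefully is precisely this inclusion $p+1 < 3$, which is exactly what the standing restriction $p \in (\frac{3}{2}, 2)$ guarantees; this is what lets us avoid any further interpolation and absorb the $|u|^{p+1}$-contribution directly into the $e^{(p+1)t}$-term.
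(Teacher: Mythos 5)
Your proposal is correct and is exactly the paper's argument: the paper's entire proof reads ``A combination of Lemmas \ref{po} and \ref{pote3} implies this lemma,'' and you have simply written out the details --- applying Lemma \ref{po} with exponent $p+1$ and absorbing the $\int_{s_0}^{t}e^{(p+1)s}\into|u|^{p+1}$ term via Lemma \ref{pote3} with $r=p+1\in(\frac52,3)$. The observation that $p<2$ forces $p+1<3$ is precisely the point that makes the combination work.
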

\begin{proof}
A combination of Lemmas \ref{po} and \ref{pote3} implies this lemma.
\end{proof}

The following lemma is concerned with 
the $L^p$-estimates 
for $n_1$ and $n_2$.
\begin{lem}\label{pote8}
For all $p \in (\frac{3}{2}, 2)$ there exists 
$\xi_0 > 0$ such that 
if $\frac{\chi}{\mu} < \xi_0$, then  
there exists a constant $C > 0$ such that 
     \[
     \|n_i(\cdot, t)\|_{L^p(\Omega)} \leq C 
     \quad \mbox{for all}\ t \in (0, \tmax)
     \ \mbox{and for}\ i=1, 2.
     \]
\end{lem}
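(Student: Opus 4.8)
The plan is to combine the three preceding differential inequalities into a single estimate for $\into n_1^p+\into n_2^p$ and then absorb the critical $(p+1)$-th order terms by exploiting the smallness of $\chi/\mu$. First I would eliminate the Stokes contribution: inserting Lemma \ref{pote6} into the right-hand side of Lemma \ref{pote5} removes the term $\int_{s_0}^{t}e^{(p+1)s}\into|Au|^{p+1}$ and produces a constant $C_{\ast}=C_{\ast}(p)>0$ with
\begin{align*}
\int_{s_0}^{t}e^{(p+1)s}\into|\Delta c|^{p+1}
\le C_{\ast}\int_{s_0}^{t}e^{(p+1)s}\Bigl(\into n_1^{p+1}+\into n_2^{p+1}\Bigr)
+C_{\ast}e^{(p+1)t}+C_{\ast}
\end{align*}
for all $t\in(s_0,\tmax)$. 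Substituting this into Lemma \ref{pote4} and noting that $e^{-(p+1)t}\cdot e^{(p+1)t}=1$ while $e^{-(p+1)t}\le e^{-(p+1)s_0}$, the two additional terms so created are uniformly bounded in $t$; collecting all $t$-independent contributions into one constant $\widetilde C>0$ (finite once $\ep,\ell$ are fixed below) I arrive at
\begin{align*}
\frac{1}{p}\into n_1^p+\frac{1}{p}\into n_2^p
\le\Bigl(-(\mu-\ep-\ell)+CC_{\ast}\ell^{-p}\chi^{p+1}\Bigr)
e^{-(p+1)t}\int_{s_0}^{t}e^{(p+1)s}\Bigl(\into n_1^{p+1}+\into n_2^{p+1}\Bigr)+\widetilde C,
\end{align*}
where $C=C(p)$ is the constant furnished by Lemma \ref{pote4} and $\ep,\ell>0$ are still at my disposal.

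The decisive step is to render the bracketed coefficient nonpositive, and this is precisely where the hypothesis $\chi/\mu<\xi_0$ is used. I would fix $\ep:=\tfrac{\mu}{4}$ and $\ell:=\tfrac{\mu}{2}$, so that $\mu-\ep-\ell=\tfrac{\mu}{4}>0$, and then demand
\begin{align*}
CC_{\ast}\Bigl(\tfrac{\mu}{2}\Bigr)^{-p}\chi^{p+1}\le\frac{\mu}{4},
\qquad\text{equivalently}\qquad
\Bigl(\frac{\chi}{\mu}\Bigr)^{p+1}\le\frac{1}{2^{p+2}CC_{\ast}}.
\end{align*}
Accordingly I define $\xi_0:=\bigl(2^{p+2}CC_{\ast}\bigr)^{-1/(p+1)}$, which depends only on $p$ through the constants of Lemmas \ref{pote4}--\ref{pote6} and is independent of $t$ and $\tmax$. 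Whenever $\frac{\chi}{\mu}<\xi_0$ the coefficient is $\le 0$; since its accompanying factor $e^{-(p+1)t}\int_{s_0}^{t}e^{(p+1)s}(\into n_1^{p+1}+\into n_2^{p+1})$ is nonnegative, the whole term may simply be discarded.

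This leaves $\frac{1}{p}\into n_1^p+\frac{1}{p}\into n_2^p\le\widetilde C$ for every $t\in(s_0,\tmax)$, that is, $\lp{p}{n_i(\cdot,t)}\le C$ there for $i=1,2$. It then remains only to cover the initial interval $(0,s_0]$: by Lemma \ref{pre1} one has $n_1,n_2\in C^0(\overline\Omega\times[0,\tmax))$, and since $\overline\Omega\times[0,s_0]$ is compact, $\lp{p}{n_i(\cdot,t)}$ is bounded on $[0,s_0]$ as well; merging the two ranges yields the claim for all $t\in(0,\tmax)$. I do not anticipate a genuine analytic obstacle here, because all the substantial work—the maximal Sobolev regularity for $c$ (Lemma \ref{pre2}) and for the Stokes system (Lemma \ref{po}), the Gagliardo--Nirenberg interpolation of Lemma \ref{pote5}, and the $L^r$-bound on $u$ of Lemma \ref{pote3}—has already been executed in the earlier lemmas. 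The residual task is the bookkeeping of constants together with the coefficient-absorption argument, and the only point demanding care is to confirm that the threshold $\xi_0$ can be taken uniformly in $t$, which holds since $C$ and $C_{\ast}$ depend on $p$ alone.
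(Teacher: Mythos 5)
Your proposal is correct and follows essentially the same route as the paper: combine Lemmas \ref{pote4}, \ref{pote5} and \ref{pote6}, then choose $\ep$ and $\ell$ so that the coefficient of the $(p+1)$-power integral is nonpositive under the smallness condition on $\chi/\mu$. The only cosmetic difference is that the paper extracts $\xi_0$ by optimizing $\ell\mapsto\ell+K(p)\ell^{-p}\chi^{p+1}$ (whose infimum scales linearly in $\chi$), whereas you fix $\ell=\mu/2$, $\ep=\mu/4$ and obtain an explicit, slightly less sharp but equally valid threshold; since the lemma only asserts existence of some $\xi_0>0$, both choices work.
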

\begin{proof} 
From Lemmas \ref{pote4}, \ref{pote5} and \ref{pote6} we have that 
there exists a constant $K(p) > 0$ such that 
\begin{align}\label{good}
     &\frac{1}{p}\int_{\Omega}n_1^p 
     + \frac{1}{p}\int_{\Omega}n_2^p 
\\ \notag&\leq 
- (\mu - \ep - \ell - K(p)\ell^{-p}\chi^{p+1})e^{-(p+1)t}\int_{s_0}^{t}e^{(p+1)s}
                                 \Bigl(\int_{\Omega}n_1^{p+1} + \int_{\Omega}n_2^{p+1} \Bigr) \\ \notag
              &\quad\,+ K(p)\ell^{-p}\chi^{p+1} + K(p)
     \end{align}
for all $t \in (s_0, \tmax)$, where $\ep>0$ and $\ell >0$.
Here there exists a constant $\xi_0=\xi_0(p) > 0$ such that 
\begin{align}\label{nice}
\inf_{\ell > 0} (\ell + K(p)\ell^{-p}\chi^{p+1}) = \frac{1}{\xi_0}\chi.
\end{align}
If $\frac{\chi}{\mu} < \xi_0$, 
then we see from \eqref{nice} that 
\[
\inf_{\ell > 0} (\ell + K(p)\ell^{-p}\chi^{p+1}) < \mu,
\]
and hence there exists a constant $\ell > 0$ such that 
\[
\mu > \ell + K(p)\ell^{-p}\chi^{p+1}. 
\]
Therefore, under the condition that $\frac{\chi}{\mu} < \xi_0$, 
we can find $\ep > 0$ satisfying 
\[
\mu-\ep -\ell-K(p)\ell^{-p}\chi^{p+1} \geq 0,
\]
which enables us to obtain from $\eqref{good}$ that 
\[
\frac{1}{p}\int_{\Omega}n_1^p 
+ \frac{1}{p}\int_{\Omega}n_2^p 
\leq K(p)\ell^{-p}\chi^{p+1} + K(p)
\]
holds on $(s_0, \tmax)$. 
\end{proof}

\begin{lem}\label{pote9}
Assume $\frac{\chi}{\mu} < \xi_0$. Then there 
exists a constant $C > 0$ such that 
     \[
     \|A^{\vartheta}u(\cdot, t)\|_{L^2(\Omega)} \leq C
    \quad \mbox{and}\quad  
    \lp{\infty}{u(\cdot,t)}\le C
    \]
for all $t\in (0,\tmax)$. 
\end{lem}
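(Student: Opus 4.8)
The plan is to treat the fourth equation in \eqref{P} via the Helmholtz projection ${\cal P}$ and the Stokes semigroup $(e^{-tA})_{t\ge0}$, exactly as in the proof of Lemma \ref{po}. Writing $g(\cdot,s):=(\gamma n_1(\cdot,s)+\delta n_2(\cdot,s))\nabla\phi$ and projecting, the equation becomes $u_t+Au={\cal P}g$, so the variation-of-constants formula gives
\[
u(\cdot,t)=e^{-tA}u_0+\int_0^t e^{-(t-s)A}{\cal P}g(\cdot,s)\,ds.
\]
Applying $A^{\vartheta}$ and the triangle inequality, I would reduce the claim to controlling $\|A^{\vartheta}e^{-tA}u_0\|_{L^2(\Omega)}$ and the convolution integral $\int_0^t \|A^{\vartheta}e^{-(t-s)A}{\cal P}g(\cdot,s)\|_{L^2(\Omega)}\,ds$.

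The first term is harmless: since $u_0\in D(A^{\vartheta})$ by \eqref{condi;ini1}, one has $\|A^{\vartheta}e^{-tA}u_0\|_{L^2(\Omega)}=\|e^{-tA}A^{\vartheta}u_0\|_{L^2(\Omega)}\le C$ uniformly in $t$, using the exponentially decaying boundedness of the analytic Stokes semigroup on $L^2_\sigma(\Omega)$ together with the commutation of $A^{\vartheta}$ with $e^{-tA}$. For the integral I would invoke the standard smoothing estimate for the Stokes semigroup: there are $\lambda>0$ and, for each admissible $p$, a constant $C>0$ with
\[
\|A^{\vartheta}e^{-\sigma A}{\cal P}w\|_{L^2(\Omega)}\le C\,\sigma^{-\vartheta-\frac{3}{2}\left(\frac1p-\frac12\right)}e^{-\lambda\sigma}\,\lp{p}{w}
\]
for all $\sigma>0$. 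The key input is then the uniform $L^p$-bound on $n_1,n_2$ from Lemma \ref{pote8}: since $\phi\in C^{1+\eta}(\overline{\Omega})$ by \eqref{condi;ini2}, $\nabla\phi$ is bounded, whence $\sup_{s\in(0,\tmax)}\lp{p}{g(\cdot,s)}\le C$ for each $p\in(\frac32,2)$.

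The one point that needs care is the choice of $p$. Setting $\alpha:=\vartheta+\frac32\left(\frac1p-\frac12\right)$, the convolution integral is dominated by $C\sup_s\lp{p}{g(\cdot,s)}\int_0^\infty \sigma^{-\alpha}e^{-\lambda\sigma}\,d\sigma$, and this is finite precisely when $\alpha<1$: the factor $\sigma^{-\alpha}$ is integrable at $\sigma=0$ exactly under this condition, while the factor $e^{-\lambda\sigma}$ secures convergence at $\sigma=\infty$ and, crucially, uniformity in $t$. Because $\vartheta<1$ is fixed and $\frac32\bigl(\frac1p-\frac12\bigr)\to0$ as $p\uparrow2$, I would fix $p\in(\frac32,2)$ close enough to $2$ so that $\alpha<1$; Lemma \ref{pote8} then supplies the required $L^p$-bound for precisely that $p$. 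This yields $\sup_{t\in(0,\tmax)}\|A^{\vartheta}u(\cdot,t)\|_{L^2(\Omega)}<\infty$, the first asserted bound.

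Finally, since $\vartheta>\frac34$, the continuous embedding $D(A^{\vartheta})\hookrightarrow L^{\infty}(\Omega)$ valid in the three-dimensional setting (as $2\vartheta>\frac32$) converts this into $\lp{\infty}{u(\cdot,t)}\le C$ for all $t\in(0,\tmax)$, completing the proof. The main, and only mildly delicate, obstacle is exactly this balancing of the singularity exponent $\alpha$ against $1$, which forces the choice of $p$ near $2$; it is at this step that the smallness hypothesis $\frac{\chi}{\mu}<\xi_0$ enters, through the $L^p$-estimate of Lemma \ref{pote8}.
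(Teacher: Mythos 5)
Your proposal is correct and follows essentially the same route as the paper, whose proof simply invokes the argument of \cite[Lemma 3.9]{CKM}: the variation-of-constants representation for the Stokes evolution, the $L^p$--$L^2$ smoothing estimates for $A^{\vartheta}e^{-\sigma A}$ combined with the uniform $L^p$-bounds on $n_1,n_2$ from Lemma \ref{pote8} for a suitable $p\in(\tfrac32,2)$, and finally the embedding $D(A^{\vartheta})\hookrightarrow L^{\infty}(\Omega)$ for $\vartheta>\tfrac34$. Your explicit check that $p$ can be taken close enough to $2$ so that $\vartheta+\tfrac32\bigl(\tfrac1p-\tfrac12\bigr)<1$ is exactly the point that makes the convolution integral converge, and is handled the same way in the cited proof.
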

\begin{proof}
Thanks to Lemma \ref{pote8}, we can show this lemma 
by the same argument as in the proof of 
\cite[Lemma 3.9]{CKM}.
\end{proof}

 
\begin{lem}\label{pote12}
Assume $\frac{\chi}{\mu} < \xi_0$. Then there exist 
$r \in (3, 6) \cap (1, q]$ and $C=C(r) > 0$ such that 
     $$
     \|\nabla c(\cdot, t)\|_{L^r(\Omega)} \leq C
     \quad \mbox{for all}\ t \in (0, \tmax).
     $$
\end{lem}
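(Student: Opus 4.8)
The plan is to upgrade the $L^2$-bound for $\nabla c$ from Lemma \ref{lem;grad;c} to an $L^r$-bound by treating the third equation in \eqref{P} through the variation-of-constants formula for the Neumann heat semigroup. Setting $f := \alpha n_1 + \beta n_2 - u\cdot\nabla c$ and absorbing the $-c$ term into a shift, one has for $t\in(s_0,\tmax)$
$$
c(\cdot,t) = e^{(t-s_0)(\Delta-1)}c(\cdot,s_0) + \int_{s_0}^t e^{(t-s)(\Delta-1)} f(\cdot,s)\,ds,
$$
where $(e^{\tau\Delta})_{\tau\ge0}$ is the Neumann heat semigroup on $\Omega$. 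Fix $p\in(\tfrac{3}{2},2)$ as in Lemma \ref{pote8}. The inputs I would combine are: the $L^p$-bounds for $n_1,n_2$ from Lemma \ref{pote8}; the bound $\lp{\infty}{u(\cdot,t)}\le C$ from Lemma \ref{pote9} together with $\lp{2}{\nabla c(\cdot,t)}\le C$ from Lemma \ref{lem;grad;c}, which give $\lp{p}{u\cdot\nabla c}\le \lp{\infty}{u}\,\lp{2}{\nabla c}\,|\Omega|^{\frac{1}{p}-\frac{1}{2}}\le C$ because $p<2$; and the regularity $c(\cdot,s_0)\in W^{2,\infty}(\Omega)$ recorded after Lemma \ref{pre1}. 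Hence $\sup_{s\in(s_0,\tmax)}\lp{p}{f(\cdot,s)}\le C$. It is worth stressing that there is no circularity here: the weak $L^2$-information on $\nabla c$ is fed into the source term, and the stronger $L^r$-bound is produced as output.

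Applying $\nabla$ and invoking the standard smoothing estimate $\lp{r}{\nabla e^{\tau(\Delta-1)}w}\le C\bigl(1+\tau^{-\frac{1}{2}-\frac{3}{2}(\frac{1}{p}-\frac{1}{r})}\bigr)e^{-\tau}\lp{p}{w}$ for the Neumann heat semigroup, I would estimate
$$
\lp{r}{\nabla c(\cdot,t)} \le C e^{-(t-s_0)}\lp{r}{\nabla c(\cdot,s_0)} + C\int_{s_0}^t \Bigl(1+(t-s)^{-\frac{1}{2}-\frac{3}{2}(\frac{1}{p}-\frac{1}{r})}\Bigr)e^{-(t-s)}\lp{p}{f(\cdot,s)}\,ds.
$$
The first term is harmless thanks to $c(\cdot,s_0)\in W^{2,\infty}(\Omega)$. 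For the integral, after using $\lp{p}{f}\le C$ and substituting $\sigma=t-s$, uniform boundedness in $t$ reduces to $\int_0^\infty\bigl(1+\sigma^{-\frac{1}{2}-\frac{3}{2}(\frac{1}{p}-\frac{1}{r})}\bigr)e^{-\sigma}\,d\sigma<\infty$, which holds precisely when $\frac{1}{2}+\frac{3}{2}(\frac{1}{p}-\frac{1}{r})<1$, that is $\frac{1}{r}>\frac{1}{p}-\frac{1}{3}$, equivalently $r<\frac{3p}{3-p}$.

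The crucial point—and the reason the threshold $p>\frac{3}{2}$ from Lemma \ref{pote8} is exactly what is required—is that $\frac{3p}{3-p}>3$ holds if and only if $p>\frac{3}{2}$; thus the window $\bigl(3,\frac{3p}{3-p}\bigr)$ for $r$ is nonempty, and since $q>3$ one may pick $r\in(3,6)\cap(1,q]$ inside it. With such $r$ fixed the integral converges and $\lp{r}{\nabla c(\cdot,t)}\le C$ follows on $(s_0,\tmax)$. To cover the initial interval $(0,s_0]$, I would invoke $c\in L^\infty_{{\rm loc}}([0,\tmax);W^{1,q}(\Omega))$ from Lemma \ref{pre1}: since $r\le q$, the embedding $L^q(\Omega)\hookrightarrow L^r(\Omega)$ yields a uniform $L^r$-bound for $\nabla c$ on the compact set $[0,s_0]$. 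The main obstacle is the time singularity of the kernel at $s=t$; taming it rests entirely on keeping $\frac{1}{p}-\frac{1}{r}<\frac{1}{3}$, and it is the lower bound $p>\frac{3}{2}$ that leaves exactly enough room to push $r$ past $3$.
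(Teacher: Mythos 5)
Your argument is correct and reaches the conclusion by the same basic machinery the paper uses (via \cite[Lemma 3.10]{CKM}): the variation-of-constants representation for $c$ combined with the $L^p$--$L^r$ smoothing estimates for $\nabla e^{\tau\Delta}$, with the crucial input being the $L^p$-bounds of Lemma \ref{pote8} for some $p\in(\tfrac32,2)$ and the observation that $p>\tfrac32$ is precisely what makes $\tfrac{3p}{3-p}>3$, so that an admissible $r\in(3,6)\cap(1,q]$ exists. The one place where you genuinely deviate is the treatment of the convection term: the paper, following \cite{CKM}, rewrites $u\cdot\nabla c=\nabla\cdot(uc)$ (using $\nabla\cdot u=0$) and controls $\|uc\|_{L^{\theta}(\Omega)}$ for some $\theta\in(r,6)$ by combining $\|u\|_{L^\infty(\Omega)}\le C$ (Lemma \ref{pote9}) with the $L^\theta$-bound on $c$ itself from Lemma \ref{lem;grad;c}, and then applies the semigroup estimate for $e^{\tau\Delta}\nabla\cdot$; you instead keep $u\cdot\nabla c$ as a zero-order source and bound it in $L^p$ via $\|u\|_{L^\infty(\Omega)}\|\nabla c\|_{L^2(\Omega)}|\Omega|^{\frac1p-\frac12}$, which is legitimate since $p<2$ and both ingredients are already available (Lemmas \ref{pote9} and \ref{lem;grad;c}), and there is indeed no circularity. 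Your route is slightly more economical in that all three source terms are handled by a single $L^p$--$L^r$ gradient estimate, at the cost of relying on the $L^2$-bound for $\nabla c$; the paper's divergence-form route is the one inherited from the chemotaxis-Stokes setting of \cite{CKM}, where that bound plays no role, and exploits the better integrability of $c$ (up to $L^\theta$ with $\theta<6$) rather than of $\nabla c$. Either way the singularity exponent stays below $1$ and the conclusion follows; your handling of the initial layer $(0,s_0]$ via $c\in L^\infty_{\rm loc}([0,\tmax);W^{1,q}(\Omega))$ and $r\le q$ is also the standard and correct way to close the statement on all of $(0,\tmax)$.
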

\begin{proof} 
This proof is based on that of \cite[Lemma 3.10]{CKM}.
We first note from Lemma \ref{pote8} that 
for all $p\in(\frac{3}{2},2)$, 
\[
\|\alpha n_1(\cdot, s) + \beta n_2(\cdot, s)\|_{L^{p}(\Omega)} \leq C_1
\]
with some constant $C_1=C_1(p)>0$ 
and by choosing $\theta \in (r, 6)$ 
we see from Lemmas \ref{lem;grad;c} and \ref{pote9} that 
$$
\|u(\cdot, s)c(\cdot, s)\|_{L^{\theta}(\Omega)} \leq C_2
$$
with some constant $C_2>0$. 
Therefore an argument similar to that in 
the proof of 
\cite[Lemma 3.10]{CKM} implies this lemma. 
\end{proof}

\begin{lem}\label{pote13}
Assume $\frac{\chi}{\mu} < \xi_0$. 
Then there exists a constant $C > 0$ such that 
     \[
     \|n_i(\cdot, t)\|_{L^{\infty}(\Omega)} \leq C 
     \quad \mbox{for all}\ t \in (0, \tmax)
     \ \mbox{and for}\ i = 1, 2. 
     \]
\end{lem}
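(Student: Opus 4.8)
The plan is to upgrade the $L^p$-information of Lemma \ref{pote8} (valid for some $p\in(\frac32,2)$) to an $L^\infty$-bound by a semigroup bootstrap built on the Neumann heat semigroup $(e^{t\Delta})_{t\ge0}$, using the higher gradient regularity $\nabla c\in L^r(\Omega)$ with $r\in(3,6)$ from Lemma \ref{pote12} and the bound $u\in L^\infty(\Omega)$ from Lemma \ref{pote9}. First, since $\nabla\cdot u=0$, I rewrite the first two equations of \eqref{P} in divergence form,
\begin{align*}
 n_{i,t}=\Delta n_i-\nabla\cdot\bigl(\chi_i n_i\nabla c+n_i u\bigr)+\mu_i n_i(1-n_i-a_in_j),\qquad\{i,j\}=\{1,2\},
\end{align*}
and I record the pointwise upper bound for the reaction term: because $a_i,n_j\ge0$ and $n_i\ge0$,
\begin{align*}
 \mu_i n_i(1-n_i-a_in_j)\le\mu_i n_i(1-n_i)\le\tfrac{\mu_i}{4}.
\end{align*}
This constant bound on the source is what makes the logistic contribution harmless over a time window of unit length.

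The core device is to run the variation-of-constants formula on the \emph{moving} window $[t_0,t]$ with $t_0:=\max\{0,t-1\}$, namely
\begin{align*}
 n_i(\cdot,t)=e^{(t-t_0)\Delta}n_i(\cdot,t_0)-\int_{t_0}^t e^{(t-s)\Delta}\nabla\cdot\bigl(\chi_i n_i\nabla c+n_i u\bigr)(\cdot,s)\,ds+\int_{t_0}^t e^{(t-s)\Delta}\bigl[\mu_i n_i(1-n_i-a_in_j)\bigr](\cdot,s)\,ds,
\end{align*}
and to use the standard smoothing estimates for the Neumann heat semigroup (as in \cite{W-2012}): for $1\le\rho\le \tilde q\le\infty$ one has $\|e^{\sigma\Delta}\nabla\cdot w\|_{L^{\tilde q}(\Omega)}\le C(1+\sigma^{-\frac12-\frac32(\frac1\rho-\frac1{\tilde q})})\|w\|_{L^\rho(\Omega)}$ together with the $L^{\tilde q}$-contractivity $\|e^{\sigma\Delta}v\|_{L^{\tilde q}(\Omega)}\le\|v\|_{L^{\tilde q}(\Omega)}$. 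Since $|t-t_0|\le1$, no exponential decay is needed: the source term is dominated by $\frac{\mu_i}{4}$, whence its contribution is at most $\int_{t_0}^t\|e^{(t-s)\Delta}\tfrac{\mu_i}{4}\|_{L^{\tilde q}}\,ds\le\frac{\mu_i}{4}|\Omega|^{1/\tilde q}$, bounded uniformly in $t$; and the first term is bounded by $C(1+1^{-\frac32(\frac1q-\frac1{\tilde q})})\,\lp{q}{n_i(\cdot,t_0)}$, uniformly controlled by the $L^q$-bound at time $t_0$ (with $n_i(\cdot,0)=n_{i,0}\in C^0(\overline\Omega)$ when $t_0=0$).

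The heart of the argument is the bootstrap on the drift term $w_i:=\chi_i n_i\nabla c+n_i u$. Assuming $n_1,n_2$ are bounded in $L^q(\Omega)$ uniformly in $t$, I note $n_iu\in L^q(\Omega)$ (as $u\in L^\infty$) and, by H\"older, $n_i\nabla c\in L^\rho(\Omega)$ with $\frac1\rho=\frac1q+\frac1r$, so $\|w_i(\cdot,s)\|_{L^\rho(\Omega)}$ is uniformly bounded. Choosing a target exponent $q'$ with $\frac1\rho-\frac1{q'}<\frac13$, i.e. $\frac1{q'}>\frac1q+\frac1r-\frac13$, makes the singularity $\sigma^{-\frac12-\frac32(\frac1\rho-\frac1{q'})}$ integrable on $(0,1)$ and yields a uniform $L^{q'}(\Omega)$-bound for $n_i$. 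Because $r>3$ by Lemma \ref{pote12}, the gain $\kappa:=\frac13-\frac1r$ is strictly positive, so at each step I may take $\frac1{q'}=\frac1q-\frac{\kappa}{2}$; after finitely many iterations $\frac1q<\kappa$, i.e. $\frac1q+\frac1r<\frac13$ so that $\rho>3$, and then the choice $q'=\infty$ is admissible, giving $\sup_{t\in(0,\tmax)}\lp{\infty}{n_i(\cdot,t)}<\infty$ for $i=1,2$. (The moving window automatically covers small times, and on any compact subinterval $n_i$ is bounded by continuity from Lemma \ref{pre1}.)

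I expect the main obstacle to be precisely the limited integrability of the chemotactic drift: unlike \cite{CKM}, the present system provides no $L^\infty$-information on $c$, only $\nabla c\in L^r$ with finite $r$, so the passage to $L^\infty$ cannot be achieved in a single step and must be iterated. The decisive quantitative point making the iteration terminate is that $r$ lies strictly above the space dimension $3$, which guarantees the uniform positive gain $\kappa=\frac13-\frac1r$ per step; the technical care lies in checking that the kernel exponent $-\frac12-\frac32(\frac1\rho-\frac1{q'})$ stays above $-1$ at every stage and in using the unit-length window to absorb the constant logistic source, thereby dispensing with any need for exponential decay of the plain semigroup.
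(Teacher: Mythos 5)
Your argument is correct and is essentially the argument the paper relies on: the paper's proof of this lemma simply defers to \cite{CKM}, whose corresponding step is exactly this kind of variation-of-constants representation over a unit time window combined with Neumann heat semigroup smoothing, the pointwise bound $\mu_i n_i(1-n_i-a_in_j)\le \mu_i/4$ on the reaction term, and the inputs $n_i\in L^p$ (Lemma \ref{pote8}), $\nabla c\in L^r$ with $r>3$ (Lemma \ref{pote12}) and $u\in L^\infty$ (Lemma \ref{pote9}). Your quantitative bookkeeping of the iteration (gain $\frac13-\frac1r>0$ per step, jump to $L^\infty$ once $\rho>3$) is sound, so no gap to report.
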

\begin{proof}
We can prove this lemma in 
the same way as in the proof of 
\cite[Lemma 3.11]{CKM}. 
\end{proof}


\begin{prth1.1}
Lemmas \ref{pre1}, \ref{pote9}, \ref{pote12} and \ref{pote13} directly 
drive Theorem \ref{maintheorem1}. 
\qed
\end{prth1.1}

\section{Asymptotic behavior. Proof of Theorem \ref{maintheorem2}}

We first recall the following lemma 
which will give stabilization in \eqref{P}.
\begin{lem}[{\cite[Lemma 4.6]{HKMY_1}}]\label{regularity}
Let $n \in C^0(\overline{\Omega}\times[0, \infty))$ 
satisfy that 
there exist constants $C > 0$ and $\alpha_0 \in (0, 1)$ 
such that  
      \begin{align*}
      \|n\|_{C^{\alpha_0, \frac{\alpha_0}{2}}(\overline{\Omega}\times[t, t+1])} 
      \leq C
      \quad \mbox{for all}\ t \geq 1.
      \end{align*}
Assume that 
\begin{align*}
\int_{1}^{\infty}\int_{\Omega} (n(\cdot, t) - N^*)^2 < \infty 
\end{align*}
with some constant $N^{*}$. 
Then 
\begin{align*}
n(\cdot, t) \to N^* \quad \mbox{in}\ L^{\infty}(\Omega)
\quad
\mbox{as}\ t \to \infty. 
\end{align*}
\end{lem}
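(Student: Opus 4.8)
The plan is to argue by contradiction, converting the space--time $L^2$ decay into a uniform pointwise statement by means of the assumed uniform H\"older bound. As a preliminary reduction I would extract from $\int_1^\infty \into (n-N^*)^2 < \infty$ the more convenient fact that
\[
  \int_t^{t+1}\into (n(\cdot,s)-N^*)^2\,ds \longrightarrow 0
  \quad \mbox{as}\ t\to\infty,
\]
which holds simply because this quantity is dominated by the tail $\int_t^{\infty}\into (n-N^*)^2$ of a convergent integral.

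Supposing now that the conclusion fails, I would fix $\ep_0>0$, a sequence $t_k\to\infty$ with $t_k\ge 1$, and points $x_k\in\overline{\Omega}$ such that $|n(x_k,t_k)-N^*|\ge \ep_0$. The heart of the argument is to use the H\"older estimate on $[t_k,t_k+1]$ to propagate this single pointwise lower bound to a whole space--time box of fixed size. Choosing a radius $\rho\in(0,1)$ small enough that $C(\rho^{\alpha_0}+\rho^{\alpha_0/2})<\tfrac{\ep_0}{2}$, where $C$ is the uniform H\"older constant, one obtains for every $(x,s)$ with $|x-x_k|\le\rho$ and $s\in[t_k,t_k+\rho]$ the estimate
\[
  |n(x,s)-N^*|
  \ge |n(x_k,t_k)-N^*|
       - C\bigl(|x-x_k|^{\alpha_0}+|s-t_k|^{\alpha_0/2}\bigr)
  \ge \frac{\ep_0}{2}.
\]
Crucially, $\rho$ depends only on $\ep_0$, $C$ and $\alpha_0$, and not on $k$; moreover $[t_k,t_k+\rho]\subset[t_k,t_k+1]$, so the H\"older bound is indeed applicable on this interval.

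Integrating the last inequality over the box then gives
\[
  \int_{t_k}^{t_k+\rho}\into (n-N^*)^2
  \ge \Bigl(\frac{\ep_0}{2}\Bigr)^{2}\,\rho\,
      \bigl|\Omega\cap B(x_k,\rho)\bigr|.
\]
Since $\Omega$ has smooth boundary it satisfies an interior cone condition, so there is $c_\Omega>0$ with $|\Omega\cap B(x,\rho)|\ge c_\Omega\rho^3$ uniformly in $x\in\overline{\Omega}$; hence the right-hand side is bounded below by a positive constant $\delta_0$ independent of $k$. On the other hand, the left-hand side is at most $\int_{t_k}^{t_k+1}\into (n-N^*)^2$, which tends to $0$ as $k\to\infty$ by the preliminary step. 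This contradiction establishes the asserted convergence.

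I expect the only delicate point to be the uniform lower bound $|\Omega\cap B(x_k,\rho)|\ge c_\Omega\rho^3$ near $\partial\Omega$: for interior base points it is immediate, whereas for $x_k\in\partial\Omega$ one must invoke the regularity of $\partial\Omega$ to ensure that a fixed fraction of each small ball remains inside $\Omega$, uniformly in the base point. Everything else is routine once the H\"older estimate has been used to trade the pointwise deviation at $(x_k,t_k)$ for a space--time integral of definite, $k$-independent size.
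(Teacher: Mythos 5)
Your argument is correct and is essentially the standard proof of this lemma; the paper itself offers no proof but merely cites \cite[Lemma 4.6]{HKMY_1}, whose argument is exactly this contradiction scheme (pointwise deviation at $(x_k,t_k)$, propagation to a fixed space--time box via the uniform H\"older bound, and a lower bound on the box's measure contradicting the vanishing tail of the convergent integral). The one point you flag --- the uniform lower bound $|\Omega\cap B(x_k,\rho)|\ge c_\Omega\rho^3$ up to the boundary --- is indeed the only place where the smoothness of $\partial\Omega$ enters, and it holds for any bounded Lipschitz domain, so your proof is complete.
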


\subsection{Case 1: $a_1, a_2 \in (0, 1)$} 

Now we assume that $\frac{\chi}{\mu} < \xi_0$ and 
will prove asymptotic behavior of solutions to \eqref{P} 
in the case $a_1, a_2 \in (0, 1)$. 
In this case we also suppose that 
there exists $\delta_1 > 0$ such that 
   \begin{align}\label{ass;case1}
     &4\delta_1 - (1+\delta_1)^2 a_1a_2 > 0 
     \quad \mbox{and}
   \\\label{ass;case1-2}
     &\frac{\chi_1^2(1-a_1)}{4a_1\mu_1(1-a_1a_2)}
     +
     \frac{\delta_1\chi_2^2(1-a_2)}{4a_2\mu_2(1-a_1a_2)}
     <   \frac{4\delta_1 - (1+\delta_1)^2a_1a_2}
    {a_1\alpha^2 \delta + a_2\beta^2 -a_1a_2\alpha\beta (1+\delta_1)}. 
   \end{align}
The following lemma asserts 
that the assumption of Lemma \ref{regularity} 
is satisfied in the case that $a_1,a_2<1$. 

\begin{lem}\label{lem;energy;case1}
Let $(n_1, n_2, c, u,P)$ be a solution to 
\eqref{P}. If $a_1, a_2 \in (0, 1)$, 
then there exist constants $C > 0$ and $\alpha_0 \in (0, 1)$ 
such that  
      \begin{align}\label{regnc}
      \|n_1\|_{C^{\alpha_0, \frac{\alpha_0}{2}}(\overline{\Omega}\times[t, t+1])}
      +\|n_2\|_{C^{\alpha_0, \frac{\alpha_0}{2}}(\overline{\Omega}\times[t, t+1])}
      +\|c\|_{C^{\alpha_0, \frac{\alpha_0}{2}}(\overline{\Omega}\times[t, t+1])}
      \leq C 
      \end{align}
for all $t \geq 1$. 
Moreover, $n_1$, $n_2$ and $c$ satisfy 
\begin{align}\label{infty1}
\int_{1}^{\infty} \int_\Omega (n_1-N_1)^2 
+ \int_{1}^{\infty} \int_\Omega (n_2-N_2)^2 
+ \int_{1}^{\infty} \int_\Omega (c-C^*)^2 
 < \infty, 
\end{align}
where 
       \[
       N_1 := \frac{1 - a_1}{1 - a_1a_2},\quad N_2 := \frac{1 - a_2}{1 - a_1a_2} 
       \quad C^* := \alpha N_1 + \beta N_2.
       \]
\end{lem}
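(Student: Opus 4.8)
The plan is to prove the two assertions in turn: the uniform H\"older bound \eqref{regnc} from the global estimates of Theorem \ref{maintheorem1}, and then the space-time $L^2$ bound \eqref{infty1} through a Lyapunov functional, following the strategy announced at \eqref{st;ineq}.

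For \eqref{regnc} I would use only the global-in-time bounds already in hand. Since $\frac{\chi}{\mu}<\xi_0$, Theorem \ref{maintheorem1} gives $\tmax=\infty$ together with uniform bounds: $\lp{\infty}{n_1}$, $\lp{\infty}{n_2}$ and $\lp{\infty}{u}$ are bounded (Lemmas \ref{pote13}, \ref{pote9}), and $\nabla c$ is bounded in $L^r(\Omega)$ for some $r>3$ (Lemma \ref{pote12}). Writing the first equation of \eqref{P} in divergence form $\pa_t n_1=\nabla\cdot(\nabla n_1-\chi_1 n_1\nabla c-n_1 u)+\mu_1 n_1(1-n_1-a_1n_2)$ (using $\nabla\cdot u=0$), its flux $\chi_1 n_1\nabla c+n_1 u$ lies in $L^r$ with $r>3$ and its zeroth-order term is bounded; the same holds for $n_2$, while $c$ solves a heat equation whose forcing $-c-u\cdot\nabla c+\alpha n_1+\beta n_2$ lies in $L^r$. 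Standard parabolic regularity (De Giorgi--Nash--Moser for the divergence-form equations, and $L^r$ parabolic theory together with Sobolev embedding for $c$, exactly as behind Lemma \ref{regularity}) then yields a H\"older exponent $\alpha_0\in(0,1)$ and a bound uniform over the slabs $\overline\Omega\times[t,t+1]$, which is \eqref{regnc}.

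For \eqref{infty1} I would introduce the weighted Lotka--Volterra Lyapunov functional
\begin{align*}
E(t):=\frac{1}{a_1\mu_1}\into\Bigl(n_1-N_1-N_1\ln\tfrac{n_1}{N_1}\Bigr)
+\frac{\delta_1}{a_2\mu_2}\into\Bigl(n_2-N_2-N_2\ln\tfrac{n_2}{N_2}\Bigr)
+\frac{\kappa}{2}\into(c-C^*)^2,
\end{align*}
where $\delta_1$ is as in \eqref{ass;case1}--\eqref{ass;case1-2} and $\kappa>0$ is to be fixed. Each entropy integrand is nonnegative and vanishes at the steady state, so $E\ge0$, and $E(1)<\infty$ because $n_1,n_2$ are positive on the compact slab $\overline\Omega\times\{1\}$ (Lemma \ref{pre1}). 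Differentiating along \eqref{P}, the transport contributions vanish since $\nabla\cdot u=0$ and $u|_{\pa\Omega}=0$; integrating the diffusion and chemotaxis terms by parts and inserting the steady-state identities $1-N_1-a_1N_2=0=1-a_2N_1-N_2$ and $C^*=\alpha N_1+\beta N_2$, the derivative takes the form $\frac{d}{dt}E(t)=-\into\mathcal{G}-\into\mathcal{R}$, where $\mathcal{G}$ is a quadratic form in $(\frac{\nabla n_1}{n_1},\frac{\nabla n_2}{n_2},\nabla c)$ and $\mathcal{R}$ is a quadratic form in $(n_1-N_1,n_2-N_2,c-C^*)$.

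The decisive step is to make both forms sign-definite. Applying Young's inequality to the chemotactic cross terms in $\mathcal{G}$ absorbs them into the diffusive squares and leaves a multiple of $|\nabla c|^2$; this is nonnegative precisely when $\kappa\ge\frac{\chi_1^2(1-a_1)}{4a_1\mu_1(1-a_1a_2)}+\frac{\delta_1\chi_2^2(1-a_2)}{4a_2\mu_2(1-a_1a_2)}=:\kappa_0$, and I would take $\kappa=\kappa_0$ to put the least strain on $\mathcal{R}$. Completing the square in $c-C^*$ then reduces positive definiteness of $\mathcal{R}$ to that of a $2\times2$ form in $(n_1-N_1,n_2-N_2)$ with diagonal entries $\frac1{a_1}-\frac{\kappa\alpha^2}{4}$ and $\frac{\delta_1}{a_2}-\frac{\kappa\beta^2}{4}$ and off-diagonal $\frac12\bigl((1+\delta_1)-\frac{\kappa\alpha\beta}{2}\bigr)$; its determinant condition simplifies (the $\kappa^2$-terms cancel) to $4\delta_1-(1+\delta_1)^2a_1a_2>\kappa_0\,(a_1\alpha^2\delta_1+a_2\beta^2-a_1a_2\alpha\beta(1+\delta_1))$, which rearranges to \eqref{ass;case1-2}, while positivity of its left-hand side is exactly \eqref{ass;case1}. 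Thus there is $\ep>0$ with $\frac{d}{dt}E(t)\le-\ep\into\bigl[(n_1-N_1)^2+(n_2-N_2)^2+(c-C^*)^2\bigr]$; integrating over $(1,T)$ and using $E\ge0$ gives $\ep\int_1^T\into\bigl[(n_1-N_1)^2+(n_2-N_2)^2+(c-C^*)^2\bigr]\le E(1)<\infty$, and $T\to\infty$ yields \eqref{infty1}. I expect the main obstacle to be exactly this final algebra: choosing the entropy weights and $\kappa$ and tuning the Young constants so that $\mathcal{G}$ and $\mathcal{R}$ are simultaneously nonnegative, and verifying that the determinant and diagonal conditions collapse to precisely \eqref{ass;case1}--\eqref{ass;case1-2}.
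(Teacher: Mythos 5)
Your proposal is correct and follows essentially the same route as the paper: the H\"older bound comes from the uniform estimates of Lemmas \ref{pote9}, \ref{pote12}, \ref{pote13} combined with standard parabolic regularity theory, and \eqref{infty1} comes from the same weighted entropy functional $n_i-N_i-N_i\log\frac{n_i}{N_i}$ plus a quadratic term in $c-C^*$, with the transport terms vanishing by $\nabla\cdot u=0$ and the conditions \eqref{ass;case1}--\eqref{ass;case1-2} ensuring the dissipation is sign-definite. The only difference is cosmetic: the paper normalizes the weights differently and outsources the differential inequality to \cite[Lemma 2.2]{Mizukami_improve}, whereas you sketch the Young-inequality and quadratic-form computation directly.
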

\begin{proof}
We can first obtain from Lemmas \ref{pote9}, \ref{pote12}, \ref{pote13} and \cite{Ladyzenskaja} that \eqref{regnc} holds. 
Next we will confirm \eqref{infty1}. 
We put 
\begin{align*}
  E_1:=\int_\Omega 
       \left(
         n_1- N_1 - N_1\log \frac{n_1}{N_1}
       \right)
       + 
       \delta_1\frac{a_1\mu_1}{a_2\mu_2}\int_\Omega 
       \left(
         n_2- N_2 - N_2\log \frac{n_2}{N_2}
       \right)
       + \frac{\delta_2}{2}\int_\Omega (c-C^*)^2,
\end{align*}
where $\delta_1 > 0$ is a constant 
defined as 
in \eqref{ass;case1}--\eqref{ass;case1-2}
and 
$\delta_2 >0$ is a constant satisfying 
\begin{align*}
  \frac{a_2\mu_2\chi_1^2N_1}{4} 
  + \frac{\delta_1a_1\mu_1\chi_2^2N_2}{4}
  < 
  \delta_2 
  < \frac{a_1a_2\mu_1\mu_2 
    (4\delta_1 - (1+\delta_1)^2a_1a_2)}
    {a_1\alpha^2 \delta_1 + a_2\beta^2  
    -a_1a_2\alpha\beta (1+\delta_1)}. 
\end{align*}
Then noting from $\nabla\cdot u = 0$ that 
\[
  \int_{\Omega} u\cdot\nabla(\log n_i) = 0 
\] 
for all $i=1, 2$ 
and 
\[
\int_{\Omega} u\cdot\nabla (c^2) = 0,  
\] 
we see from an argument similar to that in the proof of 
\cite[Lemma 2.2]{Mizukami_improve} that 
there exists a constant $\ep_1>0$ such that 
\[
\frac d{dt}E_1(t) 
  \leq  
  - \ep_1 \Bigl(\int_\Omega (n_1-N_1)^2 + \int_\Omega (n_2-N_2)^2 
       + \int_\Omega (c-C^*)^2 \Bigr)  
\quad
  \mbox{for all}\ t>0.  
\]
Thus we have from the nonnegativity of $E_1$ that 
\[
\int_{1}^{\infty} \int_\Omega (n_1-N_1)^2 
+ \int_{1}^{\infty} \int_\Omega (n_2-N_2)^2 
+ \int_{1}^{\infty} \int_\Omega (c-C^*)^2 
\leq \frac{1}{\ep_1}E_1(1) < \infty, 
\]
which leads to \eqref{infty1}.
\end{proof}

\subsection{Case 2: $a_1 \geq 1 > a_2$} 

In this section 
we assume that $\frac{\chi}{\mu} < \xi_0$ and will obtain stabilization in \eqref{P} 
in the case $a_1 \geq 1 > a_2$. 
In this case 
we also suppose that 
there exist constants 
$\delta_1' > 0$ and $a_1' \in [1, a_1]$ such that 
\begin{align}\label{ass;case2}
&4\delta_1'-a_1'a_2(1+\delta_1')^2 > 0 
\quad \mbox{and}
\\\label{ass;case2-2}
&\mu_2 > 
\frac{\chi_2^2\delta_1'(\alpha^2 a_1'\delta_1' + \beta^2 a_2 
- \alpha\beta a_1'a_2(1+\delta_1'))}{4a_2(4\delta_1'-a_1'a_2(1+\delta_1')^2)}. 
\end{align}
We shall show the following lemma to verify that 
the assumption of Lemma \ref{regularity} 
is satisfied in the case that $a_1\ge 1> a_2$. 


\begin{lem}\label{lem;energy;case2}
Let $(n_1, n_2, c, u,P)$ be a solution to \eqref{P}. 
If $a_1 \geq 1> a_2$, then there exist constants $C > 0$ and $\alpha_0 \in (0, 1)$ 
such that  
      \begin{align}\label{regnc2}
      \|n_1\|_{C^{\alpha_0, \frac{\alpha_0}{2}}(\overline{\Omega}\times[t, t+1])}
      + \|n_2\|_{C^{\alpha_0, \frac{\alpha_0}{2}}(\overline{\Omega}\times[t, t+1])}
      + \|c\|_{C^{\alpha_0, \frac{\alpha_0}{2}}(\overline{\Omega}\times[t, t+1])} 
      \leq C 
      \end{align}
for all $t \geq 1$. 
Moreover, we have
\begin{align}\label{infty2}
\int_{1}^{\infty} \int_\Omega n_1^2 
+ \int_{1}^{\infty} \int_\Omega (n_2-1)^2 
+ \int_{1}^{\infty} \int_\Omega (c-\beta)^2 
 < \infty. 
\end{align}
\end{lem}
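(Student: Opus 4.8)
The plan is to mirror the two-step structure of the proof of Lemma~\ref{lem;energy;case1}. First I would obtain the H\"older bound \eqref{regnc2}: since Lemmas~\ref{pote9}, \ref{pote12} and \ref{pote13} already provide uniform-in-time bounds for $n_1$, $n_2$, $u$ and $\nabla c$ on $\overline{\Omega}\times(0,\infty)$, the parabolic equations for $n_1$, $n_2$, $c$ carry uniformly bounded coefficients and right-hand sides, so the parabolic H\"older estimates of \cite{Ladyzenskaja} apply on each strip $\overline{\Omega}\times[t,t+1]$ with a constant independent of $t\ge1$. This yields \eqref{regnc2} exactly as in Case~1.

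The substantive step is \eqref{infty2}. Since the target steady state is now $(N_1,N_2,C^*)=(0,1,\beta)$, the logarithmic entropy in $n_1$ used in Case~1 degenerates and must be replaced by the plain mass $\int_\Omega n_1$. I would therefore introduce the candidate Lyapunov functional
\begin{align*}
E_2 := \int_\Omega n_1
+ \delta_1'\frac{a_1'\mu_1}{a_2\mu_2}\int_\Omega\Bigl(n_2 - 1 - \log n_2\Bigr)
+ \frac{\delta_2'}{2}\int_\Omega (c-\beta)^2,
\end{align*}
where $\delta_1'$ and $a_1'$ are the constants from \eqref{ass;case2}--\eqref{ass;case2-2} and $\delta_2'>0$ is to be fixed in a window specified below. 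Each summand is nonnegative, so $E_2\ge0$. The fluid drops out just as in Case~1: from $\nabla\cdot u=0$ and $u|_{\partial\Omega}=0$ one gets $\int_\Omega u\cdot\nabla n_1=0$, $\int_\Omega u\cdot\nabla(\log n_2)=0$ and $\int_\Omega u\cdot\nabla\bigl((c-\beta)^2\bigr)=0$, so the convective terms contribute nothing to $\frac{d}{dt}E_2$.

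Differentiating $E_2$ along \eqref{P} and discarding the fluid terms, I would exploit two structural features. The logistic term for $n_1$ obeys $\mu_1 n_1(1-n_1-a_1n_2)\le\mu_1 n_1(1-n_1-a_1'n_2)$ because $a_1\ge a_1'$ and $n_2>0$; this is precisely the purpose of the auxiliary exponent $a_1'\in[1,a_1]$, which trades the possibly too-large $a_1$ for a value making the competitive block tractable, while the resulting contribution $\mu_1(1-a_1')\int_\Omega n_1\le0$ is in fact favorable. The $n_2$-entropy produces the good dissipation $-\delta_1'\frac{a_1'\mu_1}{a_2\mu_2}\int_\Omega\frac{|\nabla n_2|^2}{n_2^2}$ together with a chemotactic cross term proportional to $\int_\Omega\frac{\nabla n_2\cdot\nabla c}{n_2}$, which I would absorb by Young's inequality at the cost of a multiple of $\int_\Omega|\nabla c|^2$; this in turn is controlled by the dissipation $-\delta_2'\int_\Omega|\nabla c|^2$ coming from the $c$-equation. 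After these absorptions, $\frac{d}{dt}E_2$ is dominated by a quadratic form in $(n_1,\,n_2-1,\,c-\beta)$. Arguing as in \cite[Lemma 2.2]{Mizukami_improve}, condition \eqref{ass;case2} makes the $(n_1,n_2-1)$ block positive definite, and \eqref{ass;case2-2} is exactly the statement that the admissible window for $\delta_2'$ (the Case~1 window specialized to $N_1=0$, $N_2=1$) is nonempty, so that the $(c-\beta)$-coupling and the leftover $|\nabla c|^2$ are dominated. Choosing $\delta_2'$ in that window yields $\ep_2>0$ with
\begin{align*}
\frac{d}{dt}E_2(t)\le
-\ep_2\Bigl(\int_\Omega n_1^2 + \int_\Omega (n_2-1)^2 + \int_\Omega (c-\beta)^2\Bigr)
\quad\mbox{for all}\ t>0,
\end{align*}
and integrating in time together with $E_2\ge0$ gives \eqref{infty2} with bound $\tfrac{1}{\ep_2}E_2(1)$.

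The main obstacle I anticipate is the verification that this dissipation form is negative definite under \eqref{ass;case2}--\eqref{ass;case2-2}. Because $a_1\ge1$, the Case~1 functional is unavailable, and one must combine the slack from $a_1'$, the favorable sign of $\mu_1(1-a_1')\int_\Omega n_1$, and the logistic dissipation of $n_2$ simultaneously to absorb both the chemotactic gradient term and the indefinite cross term $-a_1'a_2\alpha\beta(1+\delta_1')$ entering \eqref{ass;case2-2}. Ensuring that the single parameter $\delta_2'$ controls the entire coupling while leaving a strictly negative form is the delicate point; once the form is shown to be definite, the remaining time integration is routine.
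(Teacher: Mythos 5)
Your proposal follows essentially the same route as the paper: the H\"older bound via Lemmas \ref{pote9}, \ref{pote12}, \ref{pote13} and \cite{Ladyzenskaja}, and then the identical Lyapunov functional $E_2$ (with the same window for $\delta_2'$ and the same cancellation of the convective terms via $\nabla\cdot u=0$), leading to the differential inequality and time integration; the paper delegates the quadratic-form verification to \cite[Lemma 4.1]{Mizukami_DCDSB} rather than \cite[Lemma 2.2]{Mizukami_improve}, but the argument you sketch is the same one. Your additional detail on the role of $a_1'$ and the absorption of the chemotactic cross term is consistent with that cited computation.
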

\begin{proof}
We first see from Lemmas \ref{pote9}, \ref{pote12}, \ref{pote13} and \cite{Ladyzenskaja} that \eqref{regnc2} holds. 
Next we will show \eqref{infty2}. 
We put 
\begin{align*}
  E_2:=\int_\Omega n_1
       + \delta_1'\frac{a_1'\mu_1}{a_2\mu_2}\int_\Omega (n_2- 1 - \log n_2)
       + \frac{\delta_2'}{2}\int_\Omega (c-\beta)^2,
\end{align*}
where $\delta_1' > 0$ and $a_1' \in [1, a_1]$ 
are constants defined 
as in \eqref{ass;case2}--\eqref{ass;case2-2} 
and $\delta_2' >0$ is a constant 
satisfying 
\begin{align*}
\frac{a_1'\mu_1\chi_2^2\delta_1'}{4a_2\mu_2} < \delta_2' < 
\frac{a_1'\mu_1(4\delta_1'-a_1'a_2(1+\delta_1')^2)}
                   {\alpha^2a_1'\delta_1' + \beta^2a_2-\alpha\beta a_1'a_2(1+\delta_1')}.
\end{align*}
Then noting from $\nabla\cdot u = 0$ that 
\[
\int_{\Omega} u\cdot\nabla(\log n_i) = 0 
\] 
for $i=1, 2$ 
and 
\[
\int_{\Omega} u\cdot\nabla (c^2) = 0,  
\] 
we derive from an argument similar to 
that in the proof of 
\cite[Lemma 4.1]{Mizukami_DCDSB} that 
there exists a constant $\ep_2>0$ such that 
\[
\frac d{dt}E_2(t) 
  \leq  
  - \ep_2 \Bigl(\int_\Omega n_1^2 + \int_\Omega (n_2 - 1)^2 
       + \int_\Omega (c-\beta)^2 \Bigr)  
\quad
  \mbox{for all}\ t>0.  
\]
Hence we obtain from the nonnegativity of $E_1$ that 
\[
\int_{1}^{\infty} \int_\Omega n_1^2 
+ \int_{1}^{\infty} \int_\Omega (n_2 - 1)^2 
+ \int_{1}^{\infty} \int_\Omega (c-\beta)^2 
\leq \frac{1}{\ep_2}E_2(1) < \infty, 
\]
which means that the desired estimate 
\eqref{infty2} holds. 
\end{proof}

\subsection{Convergence for $u$}

Finally we provide the following lemma with respect to 
the decay properties of $u$.   

\begin{lem}\label{lem;decaycu}
Under the assumptions of 
Theorems \ref{maintheorem1} and \ref{maintheorem2}, 
the solution of \eqref{P} 
has the following property\/{\rm :} 
  \begin{align*}
    \lp{\infty}{u(\cdot,t)}\to 0
    \quad \text{as}\ t\to\infty.
  \end{align*}
\end{lem}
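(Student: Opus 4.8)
The plan is to deduce the decay of $u$ from the decay of the forcing term $\gamma n_1 + \delta n_2$, which is already encoded in the convergence statements established in Lemmas \ref{lem;energy;case1} and \ref{lem;energy;case2}. The fourth equation in \eqref{P} can be rewritten via the Helmholtz projection as $u_t + Au = \mathcal{P}[(\gamma n_1 + \delta n_2)\nabla\phi]$, so $u(\cdot,t) = e^{-tA}u_0 + \int_0^t e^{-(t-s)A}\mathcal{P}[(\gamma n_1(\cdot,s) + \delta n_2(\cdot,s))\nabla\phi]\,ds$. Since $A$ has a positive spectral bound $\lambda_1 > 0$ on $L^2_\sigma(\Omega)$, the first term $e^{-tA}u_0$ decays exponentially in $L^\infty(\Omega)$ by the smoothing and decay estimates for the Stokes semigroup. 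The main work is therefore to show that the Duhamel integral tends to zero in $L^\infty(\Omega)$.

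First I would rewrite the forcing in terms of the deviations from the limit densities. In Case 1, $\gamma n_1 + \delta n_2 = \gamma N_1 + \delta N_2 + \gamma(n_1 - N_1) + \delta(n_2 - N_2)$; since $(N_1, N_2, C^*, 0)$ is a stationary solution, the constant part $\gamma N_1 + \delta N_2$ together with $\nabla\phi$ must be absorbed into the pressure gradient, so after applying $\mathcal{P}$ only the fluctuating part survives (the same remark applies in Case 2 with $n_1 \to 0$, $n_2 \to 1$). Thus the effective forcing $f(s) := \mathcal{P}[(\gamma(n_1-N_1) + \delta(n_2-N_2))\nabla\phi]$ satisfies, by Lemmas \ref{lem;energy;case1} and \ref{lem;energy;case2}, that $\int_1^\infty \|f(s)\|_{L^2(\Omega)}^2\,ds < \infty$; in particular $\|f(s)\|_{L^2(\Omega)} \to 0$ along a sequence, and more usefully the square-integrability in time is available.

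The key estimate will be the smoothing bound $\|e^{-\sigma A}\mathcal{P}g\|_{L^\infty(\Omega)} \le C\sigma^{-3/4}e^{-\lambda_1\sigma}\|g\|_{L^2(\Omega)}$ for $\sigma > 0$, valid because $D(A^\vartheta) \hookrightarrow L^\infty(\Omega)$ for $\vartheta > 3/4$ and $\|A^\vartheta e^{-\sigma A}\| \le C\sigma^{-\vartheta}e^{-\lambda_1\sigma}$. Applying this inside the Duhamel integral gives
\begin{align*}
\|u(\cdot,t)\|_{L^\infty(\Omega)}
&\le Ce^{-\lambda_1 t}\|u_0\|_{L^\infty(\Omega)}
+ C\int_0^t (t-s)^{-3/4}e^{-\lambda_1(t-s)}\|f(s)\|_{L^2(\Omega)}\,ds.
\end{align*}
To show the integral tends to zero I would split it at $s = t/2$: on $[0, t/2]$ the exponential $e^{-\lambda_1(t-s)} \le e^{-\lambda_1 t/2}$ dominates and the remaining factor is integrable, giving exponential decay; on $[t/2, t]$ one bounds $\|f(s)\|_{L^2(\Omega)} \le \sup_{s \ge t/2}\|f(s)\|_{L^2(\Omega)}$, which tends to zero as $t \to \infty$ (this uses that $\|f(s)\|_{L^2}\to 0$, which follows from the time-square-integrability of Lemma \ref{lem;energy;case1}/\ref{lem;energy;case2} combined with the uniform H\"older bounds \eqref{regnc}/\eqref{regnc2} preventing oscillation), while $\int_{t/2}^t (t-s)^{-3/4}e^{-\lambda_1(t-s)}\,ds$ stays bounded. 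I expect the main obstacle to be the justification that $\|f(s)\|_{L^2(\Omega)} \to 0$ rather than merely being square-integrable: the clean way is to note that the uniform H\"older continuity in \eqref{regnc} (resp.\ \eqref{regnc2}) makes $s \mapsto \int_\Omega (n_i - N_i)^2$ uniformly continuous, so a finite time-integral forces the integrand to vanish at infinity; this is exactly the qualitative input that upgrades the integral bound into genuine $L^\infty$-decay of $u$.
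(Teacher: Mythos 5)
Your argument is correct and is essentially the proof the paper invokes: the paper simply defers to \cite[Lemma 4.6]{CKM}, whose content is exactly your Duhamel representation for the Stokes semigroup, the observation that $\mathcal{P}$ annihilates the constant part of the forcing (being a gradient of a multiple of $\phi$), the $D(A^\vartheta)\hookrightarrow L^\infty(\Omega)$ smoothing estimate, and the upgrade of the time-square-integrability from Lemmas \ref{lem;energy;case1}--\ref{lem;energy;case2} to genuine decay via the uniform H\"older bounds. The only cosmetic points are that the smoothing exponent should be $\sigma^{-\vartheta}$ with $\vartheta\in(\tfrac34,1)$ rather than exactly $\sigma^{-3/4}$, and the initial-data term is naturally controlled by $\|A^\vartheta u_0\|_{L^2(\Omega)}$ rather than $\|u_0\|_{L^\infty(\Omega)}$; neither affects the proof.
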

\begin{proof}
From Lemmas \ref{lem;energy;case1} and \ref{lem;energy;case2} 
the same argument as in the proof of 
\cite[Lemma 4.6]{CKM} 
implies this lemma. 
\end{proof}

\subsection{Proof of Theorem \ref{maintheorem2}}

\begin{prth1.2}
A combination of Lemmas \ref{regularity}, 
\ref{lem;energy;case1}, \ref{lem;energy;case2} 
and \ref{lem;decaycu} directly leads to 
Theorem \ref{maintheorem2}. 
\qed
\end{prth1.2}

\section*{Acknowledgement}
M.M. is supported by JSPS Research 
Fellowships for Young Scientists (No. 17J00101). 
A major part of this work was written 
while S.K. and M.M. visited Universit\"at 
Paderborn under the support 
from Tokyo University of Science.


\begin{thebibliography}{10}

\bibitem{Bai-Winkler_2016}
X.~Bai and M.~Winkler.
\newblock Equilibration in a fully parabolic two-species chemotaxis system with
  competitive kinetics.
\newblock {\em Indiana Univ.\ Math.\ J.}, 65:553--583, 2016.

\bibitem{B-B-T-W}
N.~Bellomo, A.~Bellouquid, Y.~Tao, and M.~Winkler.
\newblock Toward a mathematical theory of {K}eller--{S}egel models of pattern
  formation in biological tissues.
\newblock {\em Math.\ Models Methods Appl.\ Sci.}, 25:1663--1763, 2015.

\bibitem{Black-Lankeit-Mizukami_01}
T.~Black, J.~Lankeit, and M.~Mizukami.
\newblock On the weakly competitive case in a two-species chemotaxis model.
\newblock {\em IMA J.\ Appl.\ Math.}, 81:860--876, 2016.

\bibitem{Xinru_higher}
X.~Cao.
\newblock Global bounded solutions of the higher-dimensional {K}eller--{S}egel
  system under smallness conditions in optimal spaces.
\newblock {\em Discrete Contin.\ Dyn.\ Syst.}, 35:1891--1904, 2015.

\bibitem{CKM}
X.~Cao, S.~Kurima, and M.~Mizukami.
\newblock Global existence and asymptotic behavior of classical solutions for a
  {3D} two-species chemotaxis-stokes system with competitive kinetics.
\newblock submitted, arXiv:1703.01794 [math.AP].

\bibitem{GS-1990}
Y.~Giga and H.~Sohr.
\newblock Abstract $l^p$ estimates for the {C}auchy problem with applications
  to the {N}avier-{S}tokes equations in exterior domains.
\newblock {\em J.\ Funct.\ Anal.}, 102:72--94, 1991.

\bibitem{he_zheng}
X.~He and S.~Zheng.
\newblock Convergence rate estimates of solutions in a higher dimensional
  chemotaxis system with logistic source.
\newblock {\em J. Math.\ Anal.\ Appl.}, 436:970--982, 2016.

\bibitem{Hieber-Press_Maximal-Sobolev}
M.~Hieber and J.~Pr{\"u}ss.
\newblock Heat kernels and maximal $l^p$-$l^q$ estimate for parabolic evolution
  equations.
\newblock {\em Comm.\ Partial Differential Equations}, 22:1647--1669, 1997.

\bibitem{HKMY_1}
M.~Hirata, S.~Kurima, M.~Mizukami, and T.~Yokota.
\newblock Boundedness and stabilization in a two-dimensional two-species
  chemotaxis-{N}avier--{S}tokes system with competitive kinetics.
\newblock {\em J. Differential Equations}, 263:470--490, 2017.

\bibitem{Horstmann-Wang}
D.~Horstmann and G.~Wang.
\newblock Blow-up in a chemotaxis model without symmetry assumptions.
\newblock {\em Eur.\ J.\ Appl.\ Math.}, 12:159--177, 2001.

\bibitem{K-S}
E.~F. Keller and L.~A. Segel.
\newblock Initiation of slime mold aggregation viewed as an instability.
\newblock {\em J. Theor.\ Biol.}, 26:399--415, 1970.

\bibitem{Ladyzenskaja}
O.~A. Lady{\v{z}}enskaja, V.~A. Solonnikov, and N.~N. Ural'ceva.
\newblock {\em Linear and Quasi-linear Equations of Parabolic Type}.
\newblock AMS, Providence, 1968.

\bibitem{lankeit_evsmoothness}
J.~Lankeit.
\newblock Eventual smoothness and asymptotics in a three-dimensional chemotaxis
  system with logistic source.
\newblock {\em J. Differential Equations}, 258:1158--1191, 2015.

\bibitem{Li-Xiao}
X.~Li and Y.~Xiao.
\newblock Global existence and boundedness in a 2{D}
  {K}eller--{S}egel--{S}tokes system.
\newblock {\em Nonlinear Anal.\ Real World Appl.}, 37:14--30, 2017.

\bibitem{Lin-Mu-Wang}
K.~Lin, C.~Mu, and L.~Wang.
\newblock Boundedness in a two-species chemotaxis system.
\newblock {\em Math.\ Methods Appl.\ Sci.}, 38:5085--5096, 2015.

\bibitem{Mizukami_DCDSB}
M.~Mizukami.
\newblock Boundedness and asymptotic stability in a two-species
  chemotaxis-competition model with signal-dependent sensitivity.
\newblock {\em Discrete Contin.\ Dyn.\ Syst.\ Ser.\ B}, 22:2301--2319, 2017.

\bibitem{Mizukami_improve}
M.~Mizukami.
\newblock Improvement of conditions for asymptotic stability in a two-species
  chemotaxis-competition model with signal-dependent sensitivity.
\newblock submitted, arXiv:1706.04774 [math.AP].

\bibitem{Mizukami_PPEpro}
M.~Mizukami.
\newblock Boundedness and stabilization in a two-species chemotaxis-competition
  system of parabolic-parabolic-elliptic type.
\newblock submitted, arXiv:1703.08389 [math.AP].

\bibitem{Nagai-Senba-Yoshida}
T.~Nagai, T.~Senba, and K.~Yoshida.
\newblock Application of the {T}rudinger-{M}oser inequality to a parabolic
  system of chemotaxis.
\newblock {\em Funkcial.\ Ekvac.}, 40:411--433, 1997.

\bibitem{OTYM}
K.~Osaki, T.~Tsujikawa, A.~Yagi, and M.~Mimura.
\newblock Exponential attractor for a chemotaxis-growth system of equations.
\newblock {\em Nonlinear Anal.}, 51:119--144, 2002.

\bibitem{Sohr_NS}
H.~Sohr. 
\newblock {\em The Navier--Stokes equations. An elementary functional analytic approach}. 
\newblock Birkh\"auser Verlag, Basel, 2001

\bibitem{stinner_tello_winkler}
C.~Stinner, J.~I. Tello, and M.~Winkler.
\newblock Competitive exclusion in a two-species chemotaxis model.
\newblock {\em J. Math.\ Biol.}, 68:1607--1626, 2014.

\bibitem{Tao-Winkler_2015_3D}
Y.~Tao and M.~Winkler.
\newblock Boundedness and decay enforced by quadratic degradation in a
  three-dimensional chemotaxis-fluid system.
\newblock {\em Z. Angew.\ Math.\ Phys.}, 66:2555--2573, 2015.

\bibitem{Tao-Winkler_2016_2D}
Y.~Tao and M.~Winkler.
\newblock Blow-up prevention by quadratic degradation in a two-dimensional
  {K}eller--{S}egel-{N}avier--{S}tokes system.
\newblock {\em Z. Angew.\ Math.\ Phys.}, 67:67--138, 2016.

\bibitem{Tello_Winkler_2012}
J.~I. Tello and M.~Winkler.
\newblock Stabilization in a two-species chemotaxis system with a logistic
  source.
\newblock {\em Nonlinearity}, 25:1413--1425, 2012.

\bibitem{win_aggregationvs}
M.~Winkler.
\newblock Aggregation vs.\ global diffusive behavior in the higher-dimensional
  {K}eller--{S}egel model.
\newblock {\em J. Differential Equations}, 248:2889--2905, 2010.

\bibitem{Winkler_2010_logistic}
M.~Winkler.
\newblock Boundedness in the higher-dimensional parabolic-parabolic chemotaxis
  system with logistic source.
\newblock {\em Comm.\ Partial Differential Equations}, 35:1516--1537, 2010.

\bibitem{W-2012}
M.~Winkler.
\newblock Global large-data solutions in a chemotaxis-({N}avier--){S}tokes
  system modeling cellular swimming in fluid drops.
\newblock {\em Comm.\ Partial Differential Equations}, 37:319--351, 2012.

\bibitem{W2014}
M.~Winkler.
\newblock Global asymptotic stability of constant equilibria in a fully
  parabolic chemotaxis system with strong logistic dampening.
\newblock {\em J. Differential Equations}, 257:1056--1077, 2014.

\bibitem{W-2015}
M.~Winkler.
\newblock Boundedness and large time behavior in a three-dimensional
  chemotaxis-{S}tokes system with nonlinear diffusion and general sensitivity.
\newblock {\em Calc.\ Var.\ Partial Differential Equations}, 54:3789--3828,
  2015.

\bibitem{YCJZ-2015}
C.~Yang, X.~Cao, Z.~Jiang, and S.~Zheng.
\newblock Boundedness in a quasilinear fully parabolic {K}eller--{S}egel system
  of higher dimension with logistic source.
\newblock {\em J. Math.\ Anal.\ Appl.}, 430:585--591, 2015.

\end{thebibliography}

\end{document}